\documentclass{amsart}
\usepackage[margin=1in]{geometry}%
\usepackage{xcolor}
\usepackage{setspace,fullpage}%
\geometry{letterpaper}%

\usepackage{amssymb}
\usepackage{amsmath,amsthm,amscd,amssymb, mathrsfs}
\usepackage{latexsym}
\usepackage{color}

\numberwithin{equation}{section}

\theoremstyle{plain}
\newtheorem{theorem}{Theorem}[section]
\newtheorem{lemma}[theorem]{Lemma}
\newtheorem{corollary}[theorem]{Corollary}
\newtheorem{proposition}[theorem]{Proposition}

\theoremstyle{definition}
\newtheorem{definition}[theorem]{Definition}

\newtheorem{problem}[theorem]{Problem}
\newtheorem{example}[theorem]{Example}

\theoremstyle{remark}

\newtheorem{case[theorem]}{Case}

\newcommand{\nothing}[1]{{}}


\title[The quotient set of the distance set]{The quotient set of the quadratic distance set over finite fields}
\author{Alex Iosevich, Doowon Koh, and Firdavs Rakhmonov}

\address{Department of Mathematics\\
University of Rochester\\
Rochester, NY  14627 USA}
\email{iosevich@math.rochester.edu}

\address{Department of Mathematics\\
Chungbuk National University \\
Cheongju, Chungbuk 28644 Korea}
\email{koh131@chungbuk.ac.kr}

\address{Department of Mathematics\\
University of Rochester\\
Rochester, NY  14627 USA}
\email{frakhmon@ur.rochester.edu}

\thanks{Key words and phrases: Finite field, Quadratic distance, Quotient set\\
The research of the first listed author was supported in part by the National Science Foundation under grant no. HDR TRIPODS -1934962 and by the NSF DMS-2154232. The second listed author was supported by the National Research Foundation of Korea (NRF) grant funded by the Korea government (MSIT) (NO. RS-2023-00249597).} 

\subjclass[2010]{52C10, 05D99, 11T23}


\begin{document}

\begin{abstract}
Let $\mathbb F_q^d$ be the $d$-dimensional vector space over the finite field $\mathbb F_q$ with $q$ elements. For each non-zero $r$ in $\mathbb F_q$ and $E\subset \mathbb F_q^d$, we  define $W(r)$ as the number of quadruples $(x,y,z,w)\in E^4$ such that $
Q(x-y)/Q(z-w)=r,$ where $Q$ is a non-degenerate quadratic form in $d$ variables over $\mathbb F_q.$
When $Q(\alpha)=\sum_{i=1}^d \alpha_i^2$ with $\alpha=(\alpha_1, \ldots, \alpha_d)\in \mathbb F_q^d,$ 
Pham (2022) recently used the machinery of group actions and  proved that if $E\subset \mathbb F_q^2$ with $q\equiv 3 \pmod{4}$ and $|E|\ge C q$, then we have $W(r)\ge c |E|^4/q$ for any non-zero square number  $r \in \mathbb F_q,$ where $C$ is a sufficiently large constant, $ c$ is some number between $0$ and $1,$ and $|E|$ denotes the cardinality of the set $E.$

In this article, we improve and extend Pham's result in two dimensions to arbitrary dimensions with general non-degenerate  quadratic distances. As a corollary of our results, we also generalize the sharp results on the Falconer type problem for the quotient set of distance set due to the first two authors and Parshall (2019). Furthermore, we provide improved constants for the size conditions of the underlying sets.

The key new ingredient is to relate the estimate of the $W(r)$ to a quadratic homogeneous variety in $2d$-dimensional vector space.
This approach is fruitful because it allows us to take advantage of Gauss sums which are more handleable than the Kloosterman sums appearing in the standard distance type problems. 
\end{abstract}
\maketitle

\section{Introduction} 

Let $\mathbb F_q^d, d\ge 2,$ be the $d$-dimensional vector space over the finite field $\mathbb F_q$ with $q$ elements. Throughout this paper, we assume that $q$ is a power of odd prime $p.$ 
Given a set $E$ in $\mathbb F_q^d$,  the distance set $\Delta(E)$ is defined by
$$ \Delta(E):=\{||x-y||\in \mathbb F_q: x,y \in E\},$$
where $||\alpha||=\sum_{i=1}^d \alpha_i^2$ for $\alpha=(\alpha_1, \ldots, \alpha_d)\in \mathbb F_q^d.$ 

In the finite field setting, the Falconer distance problem asks for the minimal exponent $\alpha>0$ such that  for any subset $E$ of $\mathbb F_q^d$ with $|E|\ge C q^\alpha,$  we have
$ |\Delta(E)|\ge c q.$ Here, and throughout this paper, $C>1$ denotes a sufficiently large constant, and  $ 0<c \le 1$ denotes some constant independent of $q$ and $|E|,$ where $|E|$ denotes the cardinality of $E.$  This problem was initially proposed by the first listed author and Rudnev \cite{IR07} as a finite field analogue of the Falconer distance problem in the Euclidean space. We notice that the formulation of the finite field Falconer problem was also motivated on the Erd\H{o}s distinct distances problem over finite fields due to  Bourgain, Katz and Tao \cite{BKT04}. Hence, the problem is also called the Erd\H{o}s-Falconer distance problem.  
We refer readers to \cite{Er,GK, SV05, GIS, Fa85, GIOW, DIOWZ, DZ, DGOWWZ} for precise definition, background knowledge, and recent progress on the Erd\H{o}s distinct distances problem and  the Falconer distance problem in the Euclidean setting.  

As a strong version of the Erd\H{o}s-Falconer distance problem, one can consider the Mattila-Sj\"olin distance problem over finite fields which is to determine the smallest threshold $\beta >0$ such that any subset $E$ of $\mathbb F_q^d$ with $|E|\ge Cq^\beta$ generates all possible distances, namely, $ \Delta(E)=\mathbb F_q.$ 
Using Fourier analytic machinery and the Kloosterman sum estimate, the first listed author and Rudnev  obtained the threshold $(d+1)/2$ for all dimensions $d\ge 2.$

\begin{theorem} [Iosevich and Rudnev, \cite{IR07}]\label{IRthm} \label{IRThm} Suppose that $E\subset \mathbb F_q^d, d\ge 2,$ 
and  $|E|> 2 q^{(d+1)/2}.$  Then we have  $\Delta(E)=\mathbb F_q.$
\end{theorem}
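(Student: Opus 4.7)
The plan is the standard Fourier analytic approach on $\mathbb F_q^d$. For each $t \in \mathbb F_q$ introduce the pair-counting function
$$\nu(t) = |\{(x,y) \in E \times E : \|x-y\| = t\}|,$$
so that $\Delta(E) = \{t \in \mathbb F_q : \nu(t) > 0\}$. Since $\nu(0) \ge |E| > 0$ trivially, the theorem will follow once $\nu(t) > 0$ is established for every $t \ne 0$.

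Fix a nontrivial additive character $\chi$ of $\mathbb F_q$ and normalise $\widehat f(\xi) = \sum_x \chi(-\xi \cdot x) f(x)$. Let $S_t$ denote both the sphere $\{x \in \mathbb F_q^d : \|x\| = t\}$ and its indicator function. Expanding $S_t(x-y)$ by Fourier inversion and isolating the $\xi = 0$ contribution yields
$$\nu(t) = \frac{|S_t|\,|E|^2}{q^d} + q^{-d}\sum_{\xi \ne 0} \widehat{S_t}(\xi)\,|\widehat E(\xi)|^2.$$
The first summand will be the main term of order $|E|^2/q$ (since $|S_t| = q^{d-1} + O(q^{(d-1)/2})$ by an elementary Gauss-sum count), while the sum over $\xi \ne 0$ must be shown to be strictly smaller.

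The technical core is the pointwise bound on the Fourier transform of the sphere. Starting from the identity $S_t(x) = q^{-1}\sum_{s \in \mathbb F_q}\chi(s(\|x\|-t))$, interchange of summation and completion of the square in each coordinate produce
$$\widehat{S_t}(\xi) = q^{-1} G^d \sum_{s \ne 0} \eta^d(s)\,\chi\!\left(-st - \frac{\|\xi\|}{4s}\right),$$
where $G$ is the one-dimensional Gauss sum with $|G| = q^{1/2}$ and $\eta$ is the quadratic character of $\mathbb F_q^*$. Invoking the Weil bound on the inner (possibly $\eta$-twisted) Kloosterman-type sum when $\|\xi\| \ne 0$, and evaluating the remaining Gauss sum directly when $\|\xi\| = 0$, one obtains the uniform estimate
$$|\widehat{S_t}(\xi)| \le 2\, q^{(d-1)/2} \qquad \text{for all } \xi \ne 0.$$

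Combining this with Plancherel's identity $\sum_\xi |\widehat E(\xi)|^2 = q^d |E|$ gives
$$\left|q^{-d}\sum_{\xi \ne 0}\widehat{S_t}(\xi)\,|\widehat E(\xi)|^2\right| \le 2\, q^{(d-1)/2}|E|,$$
so $\nu(t) \ge |E|^2/q - 2 q^{(d-1)/2}|E| + (\text{lower order})$, which is strictly positive whenever $|E| > 2 q^{(d+1)/2}$. The main obstacle is the pointwise bound $|\widehat{S_t}(\xi)| \le 2q^{(d-1)/2}$: it requires completing the square, invoking the Weil bound on the twisted Kloosterman sum, and handling the $\|\xi\| = 0$ case separately, with careful bookkeeping to secure the explicit constant $2$ that appears in the stated threshold.
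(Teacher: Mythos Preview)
The paper does not give its own proof of this theorem; it is quoted as a known result from Iosevich--Rudnev \cite{IR07}. Your proposal is exactly the original argument from that reference: expand $\nu(t)$ via the Fourier transform of the sphere $S_t$, isolate a main term of size $|E|^2/q$, and control the remainder by combining the pointwise bound $|\widehat{S_t}(\xi)|\le 2q^{(d-1)/2}$ (coming from Gauss sums and the Weil bound for (twisted) Kloosterman sums) with Plancherel. So in outline you are reproducing the cited proof, and there is nothing to compare against within the present paper.

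One genuine bookkeeping point, however: as written, your main term is $|S_t|\,|E|^2/q^d$, not $|E|^2/q$, and the discrepancy $(|S_t|-q^{d-1})|E|^2/q^d$ is \emph{not} lower order. For odd $d$ one has $||S_t|-q^{d-1}|$ as large as $q^{(d-1)/2}$, so this extra piece is of size $q^{-(d+1)/2}|E|^2$; at the threshold $|E|=2q^{(d+1)/2}$ it equals $4q^{(d+1)/2}$, which is enough to make your displayed lower bound negative. The fix is to set the main term to be exactly $|E|^2/q$ and absorb the deviation into the error: writing $K(\xi)=\widehat{S_t}(\xi)-q^{d-1}\delta_0(\xi)$, one checks that $|K(\xi)|\le 2q^{(d-1)/2}$ holds for \emph{every} $\xi$ (including $\xi=0$, where $|K(0)|=||S_t|-q^{d-1}|\le q^{(d-1)/2}$), and then
\[
\nu(t)=\frac{|E|^2}{q}+q^{-d}\sum_{\xi}K(\xi)\,|\widehat{E}(\xi)|^2\ \ge\ \frac{|E|^2}{q}-2q^{(d-1)/2}|E|,
\]
which is strictly positive precisely when $|E|>2q^{(d+1)/2}$. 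This is presumably what you intend by ``careful bookkeeping,'' but it should be made explicit, since the naive splitting you wrote does not deliver the constant $2$.
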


The threshold $(d+1)/2$ in Theorem \ref{IRthm} is the best currently known result on the Mattila-Sj\"olin distance problem over finite fields for all dimensions $d\ge 2.$ It is considered as an extremely hard problem to improve the $(d+1)/2$ result.  Moreover,  in the general setting of odd dimensions,  it gives the optimal threshold, which was proven in \cite{HIKR10}.
 
However, in even dimensions,  it has been believed that the exponent $(d+1)/2$ can be improved but a reasonable evidence or conjecture has not been stated in literature. In two dimensions, Murphy and Petridis \cite{MP19} showed that the threshold  cannot be lower than $4/3$ for the Mattila-Sj\"olin distance problem over finite fields.

On the other hand,  the first listed author and Rudnev \cite{IR07} conjectured that the threshold $(d+1)/2$ can be lower to $d/2$ for the Erd\H{o}s-Falconer distance problem in even dimensions, and  in two dimensions the threshold $4/3$ was proven by the authors in \cite{CEHIK09} (see also \cite{BHIPR}). Furthermore, when $q$ is prime, the exponent $4/3$ was improved to $5/4$ by Murphy,  Petridis,  Pham,  Rudnev, and  Stevenson \cite{MPPRS}. We also notice that the threshold $(d+1)/2$ cannot be improved for the Erd\H{o}s-Falconer distance problem in general odd dimensions (see also \cite{HIKR10}). 

The distance problems over finite fields have been extended  in various directions.
For example, Shparlinski \cite{Sh06} studied the size of the distance set  between two sets in $\mathbb F_q^d$ (see also \cite{KS12, KS15}). The generalized distance problems with polynomial distances were investigated by  the second listed author and Shen \cite{KS}  and Vinh  \cite{Vi13}.  Covert, the second listed author, and Pi \cite{CKP} studied the size of the $k$-distance set.
Hart and the first listed author \cite{HI08} extended the distance problem to the $k$-simplices problem over finite fields (see also \cite{BCCHIP, BIP14, Pa17, PPV, Vi12}). Shparlinski \cite{Sh16} addressed the Erd\H{o}s-Falconer distance problem for the sum of the distance set, namely $\Delta(E) + \Delta(E)$ (see also \cite{CKP21, KPSV21}).

Although lots of variants of the distance problems were extensively studied, the threshold $d/2$ for the set $E$ in $\mathbb F_q^d$ had not been addressed for any distance type problems until the first two authors and Parshall \cite{IKP} studied the following Mattila-Sj\H{o}lin problem for the quotient set of the distance set over finite fields: 
\begin{problem} [The Mattila-Sj\H{o}lin problem for the quotient set of the distance set]\label{prIKP} Given a set $E$ in $\mathbb F_q^d, d\ge 2,$ the quotient set of the distance set, denoted by $\frac{\Delta(E)}{\Delta(E)},$ is defined by
$$\frac{\Delta(E)}{\Delta(E)}:=\left\{ \frac{||x-y||}{||z-w||}: x,y, z, w\in E, ~ ||z-w||\ne 0\right\}.$$
Determine the smallest exponent $\gamma>0$ such that,  for any set $E \subset \mathbb F_q^d$ with $|E|\ge C q^\gamma$, we have
\begin{equation}\label{Qeq} \frac{\Delta(E)}{\Delta(E)}=\mathbb F_q.\end{equation}
\end{problem}

The aforementioned authors obtained the threshold $d/2$ in even dimensions on the Mattila-Sj\H{o}lin problem for the quotient set of the distance set over finite fields. More precisely, they proved the following result.

\begin{theorem} [Theorems 1.1, 1.2, \cite{IKP}]\label{IKPmthm} Let $E \subset \mathbb F_q^d, d\ge 2.$ Then the following statements hold:
\begin{enumerate} \item  [(i)]If $d\ge 2$ is even and $ |E|\ge 9 q^{d/2},$ then  $\dfrac{\Delta(E)}{\Delta(E)}=\mathbb F_q.$

\item [(ii)] If $d\ge 3$ is odd and $|E|\ge 6 q^{d/2},$ then  $\dfrac{\Delta(E)}{\Delta(E)}\supseteq(\mathbb F_q)^2,$
where $(\mathbb F_q)^2 := \{a^2: a\in \mathbb F_q\}.$  
\end{enumerate}
\end{theorem}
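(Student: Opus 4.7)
The plan is to prove $r \in \Delta(E)/\Delta(E)$ by showing that the counting function
\[
W(r) := \bigl|\{(x,y,z,w) \in E^4 : \|x-y\| = r\|z-w\|,\ \|z-w\|\neq 0\}\bigr|
\]
is strictly positive for the relevant $r$. For $r = 0$ one takes $x = y$ together with any $(z,w) \in E^2$ with $\|z-w\| \neq 0$; such a pair exists because $|E| \geq Cq^{d/2}$ far exceeds the size of any isotropic coset. So the interesting case is $r \neq 0$.

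Using the standard orthogonality $\mathbf{1}(a=0) = q^{-1}\sum_s \chi(sa)$ with the canonical additive character $\chi$ on $\mathbb{F}_q$, and writing $A(s) := \sum_{x,y \in E}\chi(s\|x-y\|)$, one expands
\[
q\,W(r) = |E|^4 - q\,\nu_E(0)^2 + \sum_{s \neq 0} A(s)\,A(-sr),
\]
where $\nu_E(t) := |\{(x,y)\in E^2 : \|x-y\|=t\}|$. The classical Gauss sum identity $\sum_{z \in \mathbb{F}_q^d}\chi(s\|z\|) = \eta(s)^d G_0^d$ (with $\eta$ the quadratic character and $|G_0|=\sqrt{q}$), applied to $A(s)$ after completing the square, yields for $s \neq 0$
\[
A(s) = q^{-d}\eta(s)^d G_0^d \sum_m |\hat{E}(m)|^2 \chi\!\Bigl(\tfrac{-\|m\|}{4s}\Bigr).
\]
Substituting this into $A(s)A(-sr)$, the combination $\eta(s)^d \eta(-sr)^d\, G_0^{2d}$ simplifies to $\eta(r)^d q^d$---crucially \emph{independent of $s$}---so the remaining inner sum over $s \neq 0$ collapses by character orthogonality. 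A short computation produces
\[
\frac{1}{q}\sum_{s\neq 0}A(s)A(-sr) = q^{-d}\eta(r)^d P(r) - q^{d-1}\eta(r)^d |E|^2,
\]
where $P(r) := \sum_{(m,m'):\ \|m'\|=r\|m\|}|\hat{E}(m)|^2|\hat{E}(m')|^2 \ge 0$.

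The essential dichotomy is now the sign of $\eta(r)^d$: it equals $+1$ for every $r \in \mathbb{F}_q^*$ when $d$ is even, and for every non-zero square $r$ when $d$ is odd. In both favorable cases the non-negative term $q^{-d}\eta(r)^d P(r)$ can be discarded in a lower bound, yielding
\[
W(r) \ge \frac{|E|^4}{q} - q^{d-1}|E|^2 - \nu_E(0)^2,
\]
whose leading difference is positive as soon as $|E|^2 > q^d$. A parallel Gauss sum computation expresses $\nu_E(0) = |E|^2/q + \epsilon q^{-d/2}N_0 - \epsilon q^{d/2-1}|E|$ for $N_0 := \sum_{\|m\|=0}|\hat{E}(m)|^2$ and some sign $\epsilon = \pm 1$; combined with the inequality $P(r) \ge N_0^2$ (obtained by restricting to pairs with $\|m\|=\|m'\|=0$), this lets one reinstate enough of $q^{-d}P(r)$ to absorb $\nu_E(0)^2$ in the regime where $E$ is concentrated on the null cone. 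Tracking constants carefully then yields the thresholds $9q^{d/2}$ for even $d$ and $6q^{d/2}$ for odd $d$ restricted to squares.

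I expect the principal obstacle to be this last joint estimation: the naive bounds $\nu_E(0)^2 \le |E|^4$ and $q^{-d}P(r) \le q^d|E|^2$ are far too loose in isolation, and one must exploit their common origin in $N_0$ through the Gauss sum expressions to obtain the clean thresholds. For $d$ odd and $r$ a non-square, $\eta(r)^d = -1$ places $P(r)$ on the \emph{wrong} side of the inequality so the dropping step fails; one would then need an independent upper bound on $P(r)$, which amounts to a Kloosterman-type estimate and only recovers the weaker Iosevich--Rudnev threshold $(d+1)/2$. This explains why part (ii) is restricted to squares.
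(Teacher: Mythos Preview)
Your approach is correct and is, in content, the same argument the paper uses to prove its own main theorem (Theorem~\ref{mainthm}), which supersedes the cited result with the improved constants $4$ and $3$. The paper does not give a separate proof of Theorem~\ref{IKPmthm}; it is quoted from \cite{IKP}.

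The only difference is organizational. The paper packages your identity by passing to the product set $E\times E\subset\mathbb F_q^{2d}$ and the homogeneous variety $V_{\mathcal Q_r}=\{(x,x'):Q(x)-rQ(x')=0\}$: your quantity $P(r)=\sum_{\|m'\|=r\|m\|}|\widehat E(m)|^2|\widehat E(m')|^2$ is precisely (after normalization) the paper's $q^{3d}\sum_{M\in V_{\mathcal Q_r^*}}|\widehat{E\times E}(M)|^2$, and your crucial observation $P(r)\ge N_0^2$ is exactly the inequality
\[
q^{3d}\sum_{M\in V_{\mathcal Q_r^*}}|\widehat{E\times E}(M)|^2 \;\ge\; \Bigl(q^{3d/2}\sum_{m\in (S_{Q^*})_0}|\widehat E(m)|^2\Bigr)^2
\]
used in the paper's Case~A. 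So the ``$2d$-dimensional variety'' viewpoint and your direct $A(s)A(-sr)$ expansion are two presentations of the same computation; the paper's framing mainly buys cleaner bookkeeping and hence the sharper constants.

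One small correction: your displayed formula $\nu_E(0)=|E|^2/q+\epsilon q^{-d/2}N_0-\epsilon q^{d/2-1}|E|$ is valid only for \emph{even} $d$. For odd $d$ the Fourier transform of the null cone is $q^{-1}\delta_0(m)$ on $(S_{Q^*})_0$ and has modulus $q^{-(d+1)/2}$ off it (cf.\ Corollary~\ref{cor6.2F}(ii)), so one gets directly $\nu_E(0)\le |E|^2/q+q^{(d-1)/2}|E|$ with no $N_0$ term, and the $P(r)\ge N_0^2$ trick is not needed there---you can simply drop $P(r)\ge 0$. This only simplifies your odd-$d$ case.
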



We shall write $\mathbb F_q^*$ for the set of all non-zero elements in $\mathbb F_q.$                                                  

As the main idea to deduce  Theorem \ref{IKPmthm}, the authors \cite{IKP} first observed that for any $r\in \mathbb F_q^*,$ we have 
$$r\in \frac{\Delta(E)}{\Delta(E)} \quad\mbox{if}~~  \sum_{t\in \mathbb F_q} v(t) v(rt) >v^2(0),$$ 
where
$v(t)$ is the number of the pairs $(x,y)\in E\times E$ such that $||x-y||=t,$ namely,
\begin{equation} \label{countingF} v(t):=\sum_{\substack{x, y\in E: \\ ||x-y||=t}} 1. \end{equation} 
Next, using the discrete Fourier analysis, they  estimated  a lower bound of $\sum_{t\in \mathbb F_q} v(t) v(rt)$ and an upper bound of $v^2(0).$
Finally,  the required size condition on the sets $E$ was obtained by comparing them.
Although the method of the proof led to the optimal threshold result on the Mattila-Sj\"olin  problem for the quotient set of the distance set,   it has two drawbacks below, as mentioned by Pham \cite{Ph}.

\smallskip

\begin{itemize}
\item The proof is too sophisticated and it requires large amount of calculation.

\smallskip

\item It is not clear from the proof that  how many quadruples $(x,y,z,w)\in E^4$ contribute to producing each element $r\in \mathbb F_q^*$ such that $\frac{||x-y||}{||z-w||}=r,$ namely, $r\in \frac{\Delta(E)}{\Delta(E)}.$
\end{itemize}

As a way to overcome the above issues, Pham \cite{Ph} utilized the machinery of group actions in two dimensions and obtained   a lower bound of $V(r)$ for any square number $r$ in $\mathbb F_q^*,$ 
where $V(r)$ denotes the number of the quadruples $(x,y,z,w)\in E^4$ such that $\frac{||x-y||}{||z-w||}=r,$ namely, 
\begin{equation}\label{DefV} V(r):=\sum_{\substack{x,y,z,w\in E:\\ \frac{||x-y||}{||z-w||}=r}} 1.\end{equation} 

As a consequence, he provided a short proof to deduce the following lower bound of $V(r)$ in two dimensions.

\begin{theorem} [Theorem 1.2, \cite{Ph}]\label{Phthm} Let $E$ be a subset of $\mathbb F_q^2.$   
Suppose that $|E|\ge C q$ with $q\equiv 3 \pmod{4}.$ Then, for any non-zero square number $r$ in $\mathbb F_q^*,$  we have
$$ V(r)\ge  \frac{c|E|^4}{q}.$$
In particular,  we have $\dfrac{\Delta(E)}{\Delta(E)} \supseteq (\mathbb F_q)^2:=\{a^2: a\in \mathbb F_q\}.$
\end{theorem}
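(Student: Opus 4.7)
The plan is to give a short group-action proof in the spirit of Pham. Since $r$ is a non-zero square in $\mathbb F_q^*$, write $r=s^2$ for some $s\in \mathbb F_q^*$ and consider the group of similitudes
\[
G=\{g:u\mapsto sAu+b,\ A\in O(2,\mathbb F_q),\ b\in\mathbb F_q^2\},\qquad |G|=2(q+1)q^2.
\]
Every $g\in G$ scales the quadratic form by $r$, that is, $\|g(u)-g(v)\|=r\|u-v\|$. The two-dimensional features that make $q\equiv 3\pmod 4$ so useful are: \emph{(i)} the only isotropic vector is the zero vector, and \emph{(ii)} for every $t\in\mathbb F_q^*$ the sphere $\{v:\|v\|=t\}$ has exactly $q+1$ points and $SO(2,\mathbb F_q)$ (which is cyclic of order $q+1$) acts simply transitively on it, so $O(2,\mathbb F_q)$ acts with point-stabilisers of order $2$.

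The main step is a double count of
\[
N:=\#\{(g,z,w)\in G\times E^2:g(z),g(w)\in E,\ \|z-w\|\ne 0\}.
\]
Fix $(x,y,z,w)\in E^4$ with $\|z-w\|=t\ne 0$ and $\|x-y\|=rt$. Requiring $g\in G$ to satisfy $g(z)=x$ and $g(w)=y$ forces $sA(z-w)=x-y$, which by \emph{(ii)} determines $A$ up to a $2$-element stabiliser, and then $b=x-sAz$ is determined. Thus each such quadruple comes from exactly two elements of $G$, giving $N=2V(r)$. Counted the other way,
\[
N=\sum_{g\in G}|E\cap g^{-1}(E)|^2-E_0,
\]
where $E_0$ accounts for pairs with $\|z-w\|=0$. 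By \emph{(i)} this collapses to $z=w$, and a direct count (for each of the $|E|$ diagonal pairs, $g$ is determined by $A\in O(2,\mathbb F_q)$ and an image $g(z)\in E$) gives $E_0=|O(2,\mathbb F_q)|\,|E|^2=2(q+1)|E|^2$.

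The same moving-image count shows $\sum_{g}|E\cap g^{-1}(E)|=2(q+1)|E|^2$, so Cauchy--Schwarz yields
\[
\sum_{g\in G}|E\cap g^{-1}(E)|^2\ \ge\ \frac{\bigl(2(q+1)|E|^2\bigr)^2}{2(q+1)q^2}\ =\ \frac{2(q+1)|E|^4}{q^2}.
\]
Combining the two expressions for $N$ gives $V(r)\ \ge\ \frac{(q+1)|E|^4}{q^2}-(q+1)|E|^2$, and the hypothesis $|E|\ge Cq$ with $C$ large enough makes the first term dominate, producing $V(r)\ge c|E|^4/q$ as required; this in turn forces every non-zero square, together with $0$, to lie in $\Delta(E)/\Delta(E)$.

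The step that requires the most care is \emph{(ii)}: one must verify that $O(2,\mathbb F_q)$ is not merely transitive on each non-zero sphere but has point-stabilisers of exactly size $2$, so that the first counting gives the clean identity $N=2V(r)$ rather than an inequality. Both \emph{(i)} and \emph{(ii)} are genuinely two-dimensional phenomena that depend on $q\equiv 3\pmod 4$; their failure for other quadratic forms or in higher dimensions is what prevents this elementary argument from generalising, and motivates the Gauss-sum based approach announced in the abstract.
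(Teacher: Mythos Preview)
Your argument is correct and is precisely Pham's original group-action proof from \cite{Ph}. The paper does not give its own direct proof of Theorem~\ref{Phthm}; it quotes the result from \cite{Ph} and then establishes the much more general Theorem~\ref{mainthm}, whose part~(i) with $d=2$ subsumes Theorem~\ref{Phthm}. So the relevant comparison is between your group-action argument and the paper's Fourier-analytic proof of Theorem~\ref{mainthm}.

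The two routes are genuinely different. You work inside $\mathbb F_q^2$: the similitude group $G$ is tailored to the fixed square ratio $r=s^2$, and the two facts special to $d=2$, $q\equiv 3\pmod 4$ --- no nontrivial isotropic vectors and $|O(2,\mathbb F_q)|=2(q+1)$ acting with point-stabilisers of size $2$ on each nonzero circle --- turn the double count plus Cauchy--Schwarz into the clean inequality $V(r)\ge (q+1)|E|^4/q^2-(q+1)|E|^2$. This is short and transparent but, as you note, does not extend beyond this special case. The paper instead passes to the product set $E\times E\subset \mathbb F_q^{2d}$ and the homogeneous quadric $V_{\mathcal Q_r}=\{Q(x)-rQ(x')=0\}$; the Fourier transform of $V_{\mathcal Q_r}$ is computed explicitly via Gauss sums (Proposition~\ref{propFV}), and $w(0)$ is controlled separately (Proposition~\ref{lem2.3K}). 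This removes the restriction to $d=2$, drops the hypothesis $q\equiv 3\pmod4$, handles non-square $r$ as well, and yields explicit constants (e.g.\ $C=4$, $c=5/48$ in even dimensions). What your approach buys is brevity and a structural explanation in the case at hand; what the paper's approach buys is uniformity across all non-degenerate quadratic forms, all finite fields, and all dimensions.
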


Pham's approach, based on the group action, is  powerful in the sense that it gives a simple proof and an information about a lower bound of $V(r).$ However, his result, Theorem \ref{Phthm}, is limited to two dimensions with $-1$ non-square in $\mathbb F_q,$ and it gives us no information about $V(r)$ for a non-square $r$ in $\mathbb F_q^*.$ 

\smallskip

One of the main contributions of this paper is to improve and  extend Pham's result to all dimensions for arbitrary finite fields. In particular, we will work on the problem with the non-degenerate quadratic distances which generalize the usual distance.

The other contribution is to provide significantly improved explicit constants for size conditions on the underlying sets.
These are mainly based on our approach to view the problem for the set $E$ in $\mathbb F_q^d$ as that for the product set $E\times E$ associated with certain homogeneous variety of degree two in $\mathbb F_q^{2d}$ (see Section \ref{ProofMthm}). 
As a direct consequence of this method,  we will also address  a generalized version of   Theorem \ref{IKPmthm}, with more accurate constants.

To precisely state our main results, let us set up  some definitions and notations related to the quadratic distance.
\begin{definition} [Non-degenerate quadratic form]We identify a vector $x=(x_1, \ldots, x_d)\in \mathbb F_q^d$ with a $d\times 1$ matrix over $\mathbb F_q.$ 
A non-degenerate quadratic form $Q(x)$ is a homogeneous polynomial of degree 2 in $\mathbb F_q[x_1,\ldots, x_d]$, and 
is written by $Q(x)=x^{\top} A~ x =\sum\limits_{i,j=1}^d a_{ij} x_ix_j$ for some $d\times d$ symmetric matrix $A=[a_{ij}]$ with $\det(A)\ne 0,$
where $x^\top$ denotes the transpose of the column vector $x$ and  each entry $a_{ij}$ of $A$ belongs to $\mathbb F_q.$ \end{definition}

Note that if $A$ is the identity matrix, then $Q(x)=||x||.$ Hence, $Q(x)$ generalizes the distance function $||\cdot ||.$ 
For $E\subset \mathbb F_q^d, $ the non-degenerate quadratic distance set $\Delta_Q(E)$ is defined by
$$ \Delta_Q(E):=\{Q(x-y): x, y\in E\} \subset \mathbb F_q.$$
The quotient set of the quadratic distance set $\Delta_Q(E)$  is defined as
$$ \frac{\Delta_Q(E)}{\Delta_Q(E)}:= \left\{ \frac{b}{a}\in \mathbb F_q: a, b\in \Delta_Q(E), a\ne 0\right\}.$$
For $r\in \mathbb F_q^*,$ and $E\subset \mathbb F_q^d,$  define $W(r)$ as the number of quadruples $(x,y, z, w)\in E^4$ such that  $\frac{Q(x-y)}{Q(z-w)}=r,$ namely, 
\begin{equation}\label{DefW} W(r):=\sum_{\substack{x,y,z,w\in E:\\ \frac{Q(x-y)}{Q(z-w)}=r}} 1.\end{equation} 

Our main result is as follows.
\begin{theorem} \label{mainthm} For $E\subset \mathbb F_q^d, d\ge 2,$ let $W(r)$ be defined as in \eqref{DefW}. 
\begin{enumerate}
\item [(i)] If $d$ is even and $|E|\ge 4 q^{\frac{d}{2}},$ then  $W(r)\ge \dfrac{5|E|^4}{48q}$ for all $r$ in $\mathbb F_q^*.$
Moreover, the threshold $d/2$ is sharp for any finite field $\mathbb F_q$ and all even dimensions $d\ge 2.$

\item [(ii)] If $d$ is odd  and $|E|\ge 3 q^{\frac{d}{2}},$ then $W(r)\ge \dfrac{2|E|^4}{45q}$ for all non-zero square numbers $r$ in $\mathbb F_q^*.$
Furthermore, the threshold $d/2$ cannot be lower for any finite field $\mathbb{F}_q$ and all odd dimensions $d\geq 3$.  

\item [(iii)] If $d$ is odd and $|E|\ge \dfrac{11}{6} q^{\frac{d+1}{2}},$ then  $W(r)\ge \dfrac{2|E|^4}{363q}$ for all  $r$ in $\mathbb F_q^*.$  
In addition, the threshold $(d+1)/2$ cannot be improved for any finite field $\mathbb F_q$ and all odd dimensions $d\geq 3$.
\end{enumerate}
\end{theorem}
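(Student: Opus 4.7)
The plan is to view $W(r)$ as the count of pairs $(p,q)\in F\times F$, where $F := E\times E \subset \mathbb F_q^{2d}$, whose difference lies on the zero set of the quadratic form
$$\tilde Q_r(u,v) := Q(u) - r\,Q(v), \qquad (u,v)\in \mathbb F_q^d\times \mathbb F_q^d.$$
Although $(x,y)\mapsto Q(x-y)$ is degenerate on $\mathbb F_q^{2d}$, the form $\tilde Q_r$ has block-diagonal Gram matrix with blocks $A$ and $-rA$, so $\det\tilde Q_r = (-r)^d(\det A)^2\ne 0$ and $\tilde Q_r$ is \emph{non-degenerate}. This is the key structural input the paper advertises: passing to the even ambient dimension $2d$ replaces the Kloosterman sums that arise in standard distance-type arguments by clean Gauss sums.

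The first step is to apply the orthogonality relation $\mathbf 1[a=0] = q^{-1}\sum_t\chi(ta)$, isolating the $t=0$ term $|E|^4/q$ as the main term. For each $t\ne 0$, completing the square inside the character sum reduces the inner computation to the $2d$-dimensional Gauss sum
$$\sum_{u\in\mathbb F_q^{2d}}\chi(t\tilde Q_r(u)) = \varepsilon\,q^d, \qquad \varepsilon := \eta\bigl(\det\tilde Q_r\bigr)\,\eta(-1)^d = \begin{cases} 1 & d \text{ even},\\ \eta(r) & d \text{ odd},\end{cases}$$
independently of $t$, because $\eta(t)^{2d}=1$. Substituting back, applying Plancherel to $F$, and using the product structure $\widehat F(\xi_1,\xi_2)=\widehat E(\xi_1)\widehat E(\xi_2)$, I expect to reach an exact identity
$$W(r) = \frac{|E|^4}{q} + \frac{\varepsilon\bigl(|E|^4 + S_0\bigr)}{q^d} - \varepsilon\, q^{d-1}|E|^2,$$
where $S_0 := \sum_{\xi\ne 0,\,\lambda(\xi)=0}|\widehat F(\xi)|^2$ measures the Fourier mass of $F$ on the nonzero part of the dual quadric $\{\lambda=0\}$, with $\lambda(\xi) = \xi^\top M^{-1}\xi$ and $M$ the Gram matrix of $\tilde Q_r$.

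Parts (i) and (ii) then drop out immediately: both fall into the case $\varepsilon = +1$ (always in even $d$; in odd $d$ precisely when $r$ is a nonzero square), so the nonnegative term $(|E|^4+S_0)/q^d$ can be discarded, leaving
$$W(r)\;\ge\;\frac{|E|^4}{q}-q^{d-1}|E|^2,$$
and the hypotheses $|E|\ge 4q^{d/2}$ and $|E|\ge 3q^{d/2}$ yield the stated constants $5/48$ and $2/45$ after short arithmetic. In Part (iii) ($d$ odd, $r$ non-square) one has $\varepsilon = -1$, so the term $(|E|^4+S_0)/q^d$ flips sign and must be absorbed; I would apply the crude Plancherel upper bound $S_0\le q^{2d}|E|^2-|E|^4$ to obtain
$$W(r)\;\ge\;\frac{|E|^4}{q}-q^d|E|^2,$$
and the heavier hypothesis $|E|\ge \tfrac{11}{6}q^{(d+1)/2}$ then delivers the $2|E|^4/(363q)$ bound.

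The main obstacle is Part (iii): the sign flip in the Gauss sum costs a full factor of $q$, and improving the threshold back down to $d/2$ in this case would require a genuine upper bound on $S_0$ beyond the Plancherel estimate — precisely the barrier that blocks improvement of the Iosevich--Rudnev distance threshold in odd dimensions. For the three sharpness statements, I would adapt standard extremal examples from the Erd\H os--Falconer literature: a maximal totally isotropic subspace of $Q$ (thickened by one transverse direction in Part (iii)) supplies sets $E$ of cardinality just below the stated threshold for which some $r\in\mathbb F_q^*$ is missing from $\Delta_Q(E)/\Delta_Q(E)$, matching the Hart--Iosevich--Koh--Rudnev $(d+1)/2$ example of \cite{HIKR10} in Part~(iii) and the isotropic examples behind \cite{IKP} in Parts~(i) and (ii).
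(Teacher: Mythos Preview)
Your high-level strategy---pass to $F=E\times E$ in $\mathbb F_q^{2d}$, note that $\tilde Q_r$ is non-degenerate, and compute via Gauss sums---matches the paper exactly, and your formula
\[
\frac{|E|^4}{q} + \frac{\varepsilon\bigl(|E|^4 + S_0\bigr)}{q^d} - \varepsilon\, q^{d-1}|E|^2
\]
is correct. However, this quantity is \emph{not} $W(r)$. The count of pairs $(p,q)\in F\times F$ with $\tilde Q_r(p-q)=0$ is
\[
M(r)\;:=\;\bigl|\{(x,y,z,w)\in E^4 : Q(x-y)=rQ(z-w)\}\bigr|,
\]
which includes all quadruples with $Q(x-y)=Q(z-w)=0$. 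Since $W(r)$ requires $Q(z-w)\ne 0$, the correct relation is $W(r)=M(r)-w(0)^2$ where $w(0)=|\{(x,y)\in E^2:Q(x-y)=0\}|$. You have not accounted for $w(0)^2$ at all.

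This is not a cosmetic omission. In the even-dimensional hyperbolic case (the paper's Case~A, $\eta(\varepsilon)=1$), the quadratic form has a totally isotropic subspace of dimension $d/2$, and the sharp upper bound on $w(0)$ is
\[
w(0)\;\le\;\frac{|E|^2}{q}+q^{3d/2}\sum_{m\in(S_{Q^*})_0}|\widehat E(m)|^2.
\]
If you bound the Fourier sum by Plancherel, the second term is at most $q^{d/2}|E|$, so $w(0)^2$ contains a piece of size $q^d|E|^2$. At $|E|\sim q^{d/2}$ this is $\sim q^{2d}$, a full factor of $q$ larger than your main term $|E|^4/q\sim q^{2d-1}$; thus discarding $S_0$ and then subtracting $w(0)^2$ gives nothing. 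The paper's key step is precisely to \emph{keep} the term you threw away: since $(S_{Q^*})_0\times(S_{Q^*})_0\subset V_{\mathcal Q_r^*}$ and $\widehat{E\times E}=\widehat E\otimes\widehat E$, one has
\[
q^{3d}\sum_{M\in V_{\mathcal Q_r^*}}|\widehat{E\times E}(M)|^2\;\ge\;\Bigl(q^{3d/2}\sum_{m\in(S_{Q^*})_0}|\widehat E(m)|^2\Bigr)^2,
\]
which exactly cancels the dangerous square. After that cancellation only the cross term $2q^{(d-2)/2}|E|^3$ survives, and the arithmetic for the constant $5/48$ goes through. In parts~(ii) and~(iii) ($d$ odd) the Fourier transform of the null cone has no mass on its dual cone, so $w(0)\le |E|^2/q+q^{(d-1)/2}|E|$ and the subtraction is harmless---but you still have to perform it, and your stated inequalities $W(r)\ge |E|^4/q-q^{d-1}|E|^2$ and $W(r)\ge |E|^4/q-q^d|E|^2$ are bounds for $M(r)$, not $W(r)$.
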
  

As a consequence of Theorem \ref{mainthm}, we generalize Theorem \ref{IKPmthm} with  accurate constants improved for the cardinality of sets.
\begin{corollary}\label{maincor}  
Let $E \subset \mathbb F_q^d, d\ge 2,$ and $Q\in \mathbb F_q[x_1,\ldots, x_d]$ be a non-degenerate quadratic form. Then  the following statements are valid:
\begin{enumerate} \item  [(i)] If $d$ is even and $ |E|\ge 4 q^{d/2},$ then  $\dfrac{\Delta_Q(E)}{\Delta_Q(E)}=\mathbb F_q.$
Moreover, the exponent $d/2$ cannot be improved for arbitrary finite field $\mathbb F_q$ and  all even dimensions $d\ge2.$

\item [(ii)] If $d$ is odd and $|E|\ge 3 q^{d/2},$ then  $\dfrac{\Delta_Q(E)}{\Delta_Q(E)}\supseteq(\mathbb F_q)^2,$
where $(\mathbb F_q)^2 := \{a^2: a\in \mathbb F_q\}.$  In addition,  the exponent $d/2$ is optimal for arbitrary finite field $\mathbb{F}_q$ and all odd dimensions $d\geq 3$.

\item [(iii)]  If $d$ is odd and $|E|\ge \dfrac{11}{6} q^{(d+1)/2},$ then  $\dfrac{\Delta_Q(E)}{\Delta_Q(E)}=\mathbb F_q.$ Furthermore, the threshold $(d+1)/2$ cannot be lower for any finite field $\mathbb F_q$ and all odd dimensions $d\ge 3.$
\end{enumerate}
\end{corollary}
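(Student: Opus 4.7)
The plan is to obtain Corollary \ref{maincor} as a direct consequence of Theorem \ref{mainthm}. The bridge between the two statements is the tautological observation that if $W(r)\geq 1$ for some $r\in\mathbb F_q^*$, then by the definition \eqref{DefW} there exists a quadruple $(x,y,z,w)\in E^4$ with $Q(z-w)\neq 0$ and $Q(x-y)/Q(z-w)=r$, which forces $r\in \frac{\Delta_Q(E)}{\Delta_Q(E)}$. Hence every quantitative lower bound on $W(r)$ supplied by Theorem \ref{mainthm} converts immediately into a containment statement for the quotient set.

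Given this reduction, each part of the corollary is read off from the matching part of Theorem \ref{mainthm}. In part (i), the hypothesis $|E|\geq 4q^{d/2}$ produces $W(r)\geq 5|E|^4/(48q)>0$ for every $r\in\mathbb F_q^*$, and hence $\mathbb F_q^*\subseteq \frac{\Delta_Q(E)}{\Delta_Q(E)}$. To upgrade this containment to equality with $\mathbb F_q$, I only need $0$ in the quotient set, and for that I pick any $x\in E$ together with any pair $(z,w)\in E\times E$ satisfying $Q(z-w)\neq 0$ (such a pair exists because $W(r)>0$ for at least one $r$) and write $0=Q(x-x)/Q(z-w)$. Part (ii) is handled identically, except that Theorem \ref{mainthm}(ii) supplies $W(r)>0$ only for non-zero squares $r$, accounting for the weaker conclusion $(\mathbb F_q)^2\subseteq \frac{\Delta_Q(E)}{\Delta_Q(E)}$. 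Part (iii) repeats verbatim the argument of part (i), using the larger threshold $\tfrac{11}{6}q^{(d+1)/2}$.

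For the sharpness assertions, I would transfer the extremal examples already produced for Theorem \ref{mainthm}. If a set $E\subset\mathbb F_q^d$ of size slightly below the claimed threshold makes $W(r)=0$ for some $r\in\mathbb F_q^*$ (respectively some non-zero square $r$ in part (ii)), then that same $r$ cannot belong to $\frac{\Delta_Q(E)}{\Delta_Q(E)}$, so the exponent cannot be lowered. No serious obstacle appears anywhere in this derivation; the only genuine check is the one-line verification that $0$ always lies in the quotient set whenever $\Delta_Q(E)\setminus\{0\}$ is non-empty, which is automatic from the positivity of $W$.
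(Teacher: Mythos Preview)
Your proposal is correct and follows essentially the same route as the paper: reduce to Theorem~\ref{mainthm} via the equivalence $W(r)>0 \Leftrightarrow r\in \tfrac{\Delta_Q(E)}{\Delta_Q(E)}$ for $r\in\mathbb F_q^*$, then handle $0$ separately using $Q(x-x)/Q(z-w)$ once a pair with $Q(z-w)\neq 0$ is known to exist, and finally inherit the sharpness from the extremal sets constructed for Theorem~\ref{mainthm}. The paper's argument is identical in substance and nearly identical in wording.
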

\begin{proof} We claim that the statements (i), (ii), (iii) of the corollary directly follow from those (i), (ii),(iii) of Theorem \ref{mainthm}, respectively. To prove our claim,  first observe that if $r\in \mathbb F_q^*,$  then  $W(r)>0$ if and only if  $r\in \frac{\Delta_Q(E)}{\Delta_Q(E)}.$ Combining this observation with Theorem \ref{mainthm} (i), (ii), (iii), the proof of the corollary is reduced to showing that 
$0\in   \frac{\Delta_Q(E)}{\Delta_Q(E)}$ under each assumption of Corollary (i), (ii), (iii). 
Since $ r\in \frac{\Delta_Q(E)}{\Delta_Q(E)} $ for some non-zero $r$ in $\mathbb F_q^*$,  we can choose $x,y, z, w$ in $E$ such that $Q(x-y)\ne 0$ and $Q(z-w)\ne 0.$ Hence, $0=\frac{Q(x-x)}{Q(z-w)} \in \frac{\Delta_Q(E)}{\Delta_Q(E)},$ as required.
\end{proof} 

We have the following few comments on our results, Theorem \ref{mainthm} and Corollary \ref{maincor}.
\begin{itemize}

\item  The results on Theorem \ref{mainthm} and Corollary \ref{maincor} are independent of the choice of non-degenerate quadratic forms $Q$.\\

\item Theorem \ref{mainthm} (i) improves and extends Theorem \ref{Phthm} to all even dimensions and arbitrary finite fields with the general non-degenerate quadratic distances. For instance, the theorem holds true and sharp without any further assumption such as the condition $q\equiv 3 \pmod{4},$ which was  necessary in proving Theorem \ref{Phthm} by using the group actions. In addition, our theorem provides explicit constants with  small numbers, which are based on our different approaches to estimate the counting functions.\\
 
\item Corollary \ref{maincor} (i), (ii) clearly generalize  Theorem \ref{IKPmthm} (i), (ii), respectively, with the improved constants. Notice that when $Q(x)=||x||$, the usual distance, the threshold $(d+1)/2$ in Corollary \ref{maincor} (iii) follows immediately from Theorem \ref{IRThm}. However, the constant $11/6$ for the set size in Corollary \ref{maincor} cannot be obtained from it since $2>11/6.$\\

\item In Section \ref{Sharpexamplesection}, we shall provide the sharpness examples for Theorem \ref{mainthm}. In particular, we shall show that the thresholds in the statements (i), (ii), (iii) of Theorem \ref{mainthm} are sharp for any finite field without any further assumptions on the size of the ground field $\mathbb F_q.$  In \cite{IKP}, some sharpness examples for Theorem \ref{IKPmthm} (i), (ii) were given but they work with some specific restriction to the size of $\mathbb F_q.$\\

\item In the proof of Theorem \ref{mainthm}, to efficiently estimate a lower bound of $W(r), r\ne 0,$ we will work on the product set $E\times E $ in $2d$-dimensions instead of the set $E \subset \mathbb F_q^d,$ and reduce our problem to the estimate of the explicit Fourier transform on quadratic homogeneous varieties in $2d$-dimensional vector spaces over $\mathbb F_q.$ This method enables us to obtain improved constants on the size conditions of sets (see, for example, Section \ref{ProofMthm}).\\

\end{itemize}

The rest of this paper will be organized to prove Theorem \ref{mainthm}.

\section{Equivalent forms of non-degenerate quadratic forms}
Let $Q(x)\in \mathbb F_q[x_1,\ldots, x_d]$ be a non-degenerate quadratic form.
Then we can write $Q(x)=x^\top A~x$ for some $d\times d$ matrix $A$ with $\det(A)\ne 0.$
Using a change of variables, by letting $x=Cy$ for some non-singular $d\times d$ matrix $C$, it follows that
$$ Q(x)= Q(Cy)=(Cy)^\top A~(Cy)= y^\top (C^\top A ~C)~y.$$
Letting $B=C^\top A ~C$, we obtain a non-degenerate quadratic form  $Q'(y):=Q(Cy)=y^\top B~y.$  In summary, after a non-singular change of variables,  the non-degenerate quadratic form $Q(x)$ can be transformed to another non-degenerate quadratic form $Q'(x).$ In this case, we say that $Q(x)$ is equivalent to $Q'(x).$ Furthermore, it is not hard to observe that 
$$ \Delta_Q(E)=\Delta_{Q'}(E'),$$
where $E\subset \mathbb F_q^d$ and $E':=\{C^{-1}x: x\in E\}.$ In addition, note that  $|E|=|E'|,$ and the size assumption of any sets $E$ is only considered as the main hypothesis for the distance type problems. Hence, without loss of generality, in the proof of Theorem \ref{mainthm} we may choose any non-degenerate quadratic form $Q'(x),$ which is equivalent to $Q(x),$  as a distance set. 

Now we introduce a concrete quadratic form $Q'(x)$ equivalent to any non-degenerate quadratic form $Q(x).$
Let $\eta$ denote the quadratic character of $\mathbb F_q^*,$ that is, $\eta(t)=1 $ if $t$ is a non-zero square number of $\mathbb F_q$, and $\eta(t)=-1$ if $t$ is a non-square number of $\mathbb F_q.$

\begin{lemma} [\cite{AM}, Theorem 1 and   \cite{Gr02}, P.79] \label{lemQ}Let $A$ be a $d\times d$ a symmetric matrix over $\mathbb F_q$ with $\det(A)\ne 0$ and it is associated with a non-degenerate quadratic form $Q(x)\in \mathbb F_q[x_1, \ldots, x_d].$ 
Then the following statements are true:
\begin{itemize}
\item[(i)] If $d$ is even, then the $Q(x)$ is equivalent to 
\begin{equation}\label{evenQ} Q'(x):= \sum_{i=1}^{d-1} (-1)^{i+1} x_i^2  - \varepsilon x_d^2=x_1^2-x_2^2+ \cdots + x_{d-1}^2-\varepsilon x_d^2,\end{equation}
where the $\varepsilon$ is taken as any element in $\mathbb F_q^*$ such that  $\eta((-1)^{\frac{d}{2}} \varepsilon)= \eta(\det(A)).$

\item [(ii)] If $d$ is odd, then the $Q(x)$ is equivalent to 
\begin{equation}\label{oddQ} Q'(x):= \sum_{i=1}^{d-1} (-1)^{i+1} x_i^2  + \varepsilon x_d^2=x_1^2-x_2^2+ \cdots + x_{d-2}^2-x_{d-1}^2 + \varepsilon x_d^2,\end{equation}
where the $\varepsilon$ is taken as any element in $\mathbb F_q^*$ such that  $\eta((-1)^{\frac{d-1}{2}} \varepsilon)= \eta(\det(A)).$
\end{itemize} \end{lemma}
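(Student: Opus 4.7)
\smallskip
\noindent\textbf{Proof proposal.} The plan is to prove the classification statement that, up to equivalence, there are exactly two non-degenerate quadratic forms in $d$ variables over $\mathbb F_q$ (with $q$ odd), distinguished by the square class $\eta(\det A)\in\{\pm 1\}$ of the determinant of the Gram matrix. Both parts of the lemma will then follow by verifying that the prescribed $Q'$ has the correct determinant class.

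I would first diagonalize. Since $A$ is symmetric and $\mathrm{char}(\mathbb F_q)\ne 2$, a standard Gram--Schmidt procedure produces a non-singular $C$ with $C^\top A C=\mathrm{diag}(c_1,\ldots,c_d)$ and each $c_i\in\mathbb F_q^*$; by the transformation rule $\det(C^\top A C)=(\det C)^2\det A$, the product $\prod_i c_i$ lies in the same square class as $\det A$. Thus $Q$ is equivalent to the diagonal form $\langle c_1,\ldots,c_d\rangle$.

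Next I would reduce this diagonal form to a canonical representative. The essential ingredient is that any non-degenerate binary form over $\mathbb F_q$ represents $1$: a pigeonhole count shows that the sets $\{c_i x^2:x\in\mathbb F_q\}$ and $\{1-c_j y^2:y\in\mathbb F_q\}$ each have $(q+1)/2$ elements and so must intersect. Choosing $(a,b)$ with $c_ia^2+c_jb^2=1$ and completing to an orthogonal basis of the binary subspace gives $\langle c_i,c_j\rangle\sim\langle 1,c_ic_j\rangle$. Iterating along successive pairs of coordinates reduces $\langle c_1,\ldots,c_d\rangle$ to $\langle 1,1,\ldots,1,c\rangle$ where $c\equiv\det A\pmod{(\mathbb F_q^*)^2}$. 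Since $|\mathbb F_q^*/(\mathbb F_q^*)^2|=2$, there are at most two equivalence classes, and they are distinguished by $\eta(\det A)$.

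Finally, I would compute $\det Q'$ directly from \eqref{evenQ} and \eqref{oddQ} and compare with $\det A$. The product of the diagonal coefficients of $Q'$ equals $(-1)^{d/2}\varepsilon$ when $d$ is even and $(-1)^{(d-1)/2}\varepsilon$ when $d$ is odd; the hypothesis on $\varepsilon$ is precisely that $\eta(\det Q')=\eta(\det A)$, so the classification above yields $Q\sim Q'$. I expect the main technical obstacle to be the iterative reduction in the preceding paragraph: one must carefully construct the local change of basis realizing $\langle c_i,c_j\rangle\sim\langle 1,c_ic_j\rangle$ and verify that it extends to a global transformation preserving the other diagonal entries. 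Everything else is a direct determinant computation and a pigeonhole argument.
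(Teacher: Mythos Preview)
Your proposal is correct. Note, however, that the paper does not give its own proof of this lemma: it is stated with citations to \cite{AM} (Theorem~1) and \cite{Gr02} (p.~79) and used as a black box. What you have written is precisely the standard textbook argument one finds in those references (diagonalize, use the pigeonhole count to show every non-degenerate binary form represents $1$, iterate to reach $\langle 1,\ldots,1,c\rangle$, and then match discriminant square classes). So rather than differing from the paper's approach, you are supplying the self-contained proof that the paper outsources to the literature; your determinant computations for $Q'$ in both parities are correct and complete the matching.
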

In the above lemma,  note that if $\eta(\varepsilon)=1,$ we can simply choose $\varepsilon=1.$ On the other hand, if $\eta(-\varepsilon)=1,$ then we can take $-\varepsilon =1$, namely, $\varepsilon =-1.$

\begin{example}
When $Q(x)=||x||$, it is clear that $\det(A)=1.$ 
If $d\equiv 2 \pmod{4},$ then, by Lemma \ref{lemQ}  (i),  $\eta(-\varepsilon)=1$ and so $Q(x)=||x||$ can be transformed to the following form:
$$ x_1^2-x_2^2+ \cdots+ x_{d-1}^2+x_d^2.$$ 
If $d\equiv 0 \pmod{4},$ then, by Lemma \ref{lemQ} (i),  $\eta(\varepsilon)=1$ and so $Q(x)=||x||$ can be transformed to the form below:
$$ x_1^2-x_2^2 + \cdots + x_{d-1}^2 -x_d^2.$$
On the other hand, when $d$ is odd, we can apply Lemma \ref{lemQ} (ii) to conclude that $Q(x)=||x||$ can be transformed to the quadratic form $x_1^2-x_2^2+ \cdots + x_{d-2}^2-x_{d-1}^2 - x_d^2$ for $d\equiv 3 \pmod{4},$  and to the quadratic form $x_1^2-x_2^2+ \cdots + x_{d-2}^2-x_{d-1}^2 + x_d^2$ for $d\equiv 1 \pmod{4}.$ 
\end{example}
For instance, we can consider the following simple, concrete example. 
\begin{example} 
Let $Q(x)=||x||\in \mathbb F_3[x_1, x_2, x_3].$ Then, by the above example,  we know that $Q(x)=x_1^2+x_2^2+x_3^2$ is equivalent to $x_1^2-x_2^2-x_3^2.$ 
Now let us prove this fact by a direct non-singular linear substitution.  To do this,  we note that  
$Q(x)=x^\top I_3 ~x,$ where $I_3$ denotes the $3\times 3$ identity matrix.  We use a change of variables, by letting $x=Cy$, where $C$ is the $3\times 3$ non-singular symmetric matrix defined as below:
$$ C= \left(\begin{matrix}
1&0&0\\
0&1&1\\
0&1&-1
\end{matrix}\right)=  C^\top.$$
It follows that 
$$ Q(x)=Q(Cy)=y^\top (C^\top I_3 C) y= y^\top (C^\top C) y=y^\top\left(\begin{matrix}
1&0&0\\
0&2&0\\
0&0&2
\end{matrix}\right)y.$$
Since $2\equiv -1 \pmod{3},$  we conclude that $Q(x)=||x||$ is equivalent to $x_1^2-x_2^2-x_3^2,$ as required.
\end{example}

As mentioned in the beginning of this section,  we may assume that  any non-degenerate quadratic form $Q(x)\in \mathbb F_q[x_1, \ldots, x_d]$ can be identified with  $Q'(x)$ defined as in Lemma \ref{lemQ} (i), (ii). Thus, from now on, we fix the definition of the non-degenerate quadratic form $Q(x)$, which we will use as the standard distance for the non-degenerate quadratic form $Q(x)\in \mathbb F_q[x_1, \ldots, x_d].$ 

\begin{definition} \label{defnormQ} (Standard distance functions and its dual functions) Let $Q(x)\in \mathbb F_q[x_1, \ldots, x_d]$ be a non-degenerate quadratic form and $A$ be its associated matrix. 
Then we define the standard distance function $||\cdot ||_Q$  and its dual function $||\cdot||_{Q^*}$ on $\mathbb F_q^d$ as follows:
\begin{itemize}  \item [(i)] If $d$ is even, then 
$$ ||x||_Q:=\sum_{i=1}^{d-1} (-1)^{i+1} x_i^2  - \varepsilon x_d^2=x_1^2-x_2^2+ \cdots + x_{d-1}^2-\varepsilon x_d^2,$$
$$ ||m||_{Q^*}:=\sum_{i=1}^{d-1} (-1)^{i+1} m_i^2  -\varepsilon^{-1}  m_d^2= m_1^2- m_2^2+ \cdots +  m_{d-1}^2-\varepsilon^{-1} m_d^2,$$
where the $\varepsilon$ can be taken as any element in $\mathbb F_q^*$ such that  $\eta((-1)^{\frac{d}{2}} \varepsilon)= \eta(\det(A)).$
\item [(ii)] If $d$ is odd, then 
$$ ||x||_Q:= \sum_{i=1}^{d-1} (-1)^{i+1} x_i^2  + \varepsilon x_d^2=x_1^2-x_2^2+ \cdots + x_{d-2}^2-x_{d-1}^2 + \varepsilon x_d^2,$$
$$||m||_{Q^*}:= \sum_{i=1}^{d-1} (-1)^{i+1}  m_i^2  + \varepsilon^{-1} m_d^2= m_1^2- m_2^2+ \cdots +  m_{d-2}^2- m_{d-1}^2 + \varepsilon^{-1} m_d^2,$$
where the $\varepsilon$ is taken as any element in $\mathbb F_q^*$ such that  $\eta((-1)^{\frac{d-1}{2}} \varepsilon)= \eta(\det(A)).$
\end{itemize}
\end{definition}

We also define the spheres with respect to the distances $||\cdot||_{Q}$ and $||\cdot||_{Q^*}.$
\begin{definition} \label{defSQt} Let $t\in \mathbb F_q$ and $Q(x) \in \mathbb F_q[x_1, \ldots, x_d]$ be a non-degenerate quadratic form.  We define
$$(S_{Q})_t:=\{x\in \mathbb F_q^d: ||x||_Q=t\} \quad\mbox{and}\quad (S_{Q^*})_t:=\{m\in \mathbb F_q^d:  ||m||_{Q^*}=t\}.$$
\end{definition}
The variety $(S_{Q})_t$ can be regarded as  a sphere of radius $t$ with the standard distance function $||\cdot||_Q.$
The variety $(S_{Q^*})_t$ can be called the dual sphere of $(S_{Q})_t.$

\section{Basics on the discrete Fourier analysis and related lemmas}

The discrete Fourier analysis machinery will function as a main tool in proving Theorem \ref{mainthm}.
In this section,  we introduce some basics on it without proofs and  review main properties of the Gauss sum. 
In addition, we deduce a general counting lemma which will play a key role in the proof of our main theorem, Theorem \ref{mainthm}.

\subsection{Discrete Fourier analysis and  Gauss sums}

We begin by defining the canonical additive character of $\mathbb F_q$.
Let $q$ be a power of prime $p$, say that $q=p^\ell.$  
The absolute trace function  $\text{Tr}: \mathbb F_q \to \mathbb F_p$ is well defined as  
$ \text{Tr}(t)=t+ t^p+ t^{p^2} + \cdots + t^{p^{\ell-1}}$ (for example, see Section 3  of \cite{LN97}).

\begin{definition} (Canonical additive character and the quadratic character, \cite{LN97}) \label{def1}
 The function $\chi$ defined by
 $$\chi(c)=e^{2\pi i \text{Tr}(c)/p} \quad \mbox{for}~~ c\in \mathbb F_q$$
 is called the canonical additive character of $\mathbb F_q.$  On the other hand,  the multiplicative character $\eta$ is a function $\mathbb F_q^* \to \mathbb{R},$ defined by
 $$ \eta(t)=\left\{ \begin{array}{ll} 1 \quad &\mbox{if}~t ~\mbox{is a square number of}~ \mathbb F_q^*,\\
                                                   -1 \quad &\mbox{if}~ t ~\mbox{is not a square number of }\mathbb F_q^*.\end{array}\right.$$

\end{definition}

Recall  that  $ \eta(-1)=-1 \iff q\equiv 3 \pmod{4},$  and   $\eta(-1)=1 \iff q\equiv 1 \pmod{4}$ (Remark 5.13, \cite{LN97}).

Both the additive character $\chi$ and the multiplicative character $\psi$ enjoy the orthogonality property: For any $m\in \mathbb F_q^n, ~n\ge 1$,
$$ \sum_{x\in \mathbb F_q^n} \chi(m\cdot x)=\left\{ \begin{array}{ll} 0  
\quad&\mbox{if}~~ m\neq (0,\dots,0)\\
q^n  \quad &\mbox{if} ~~m= (0,\dots,0), \end{array}\right.
\quad \mbox{and}\quad \sum_{t\in \mathbb F_q^*} \eta(at)=0  \quad\mbox{if}~~ a\ne 0.$$
where  $m\cdot x$ denotes the usual dot product notation.

For a function $f: \mathbb F_q^n \to \mathbb C,$ the Fourier transform of $f$ is defined as 
$$ \widehat{f}(m):=q^{-n} \sum_{x\in \mathbb F_q^n} \chi(-m\cdot x) f(x)$$
and the Fourier inversion theorem states that 
$$f(x)=\sum_{m\in \mathbb F_q^n} \chi(m\cdot x) \widehat{f}(m).$$

The Plancherel theorem in this context says that
$$ \sum_{m\in \mathbb F_q^n} |\widehat{f}(m)|^2 = \frac{1}{q^n} \sum_{x\in \mathbb F_q^n} |f(x)|^2.$$
In particular, for any set $E \subset \mathbb F_q^n,$ we have
\begin{equation}\label{basicP} \sum_{m\in \mathbb F_q^n} |\widehat{E}(m)|^2 = \frac{|E|}{q^n}= \widehat{E}(0,\ldots,0).\end{equation}
Here, and throughout this paper, we identify the set $E \subset \mathbb F_q^d$ with the indicator function $1_E$ of the set $E.$ In particular,  when $E=(0, \dots, 0),$  we write 
$\delta_0(x)$ for the indicator function $1_E(x).$\\

We now introduce Gauss sums.
\begin{definition}  Let $\chi, \eta$ denote the canonical additive character and the quadratic character of $\mathbb F_q^*.$ The standard Gauss sum determined by $\chi$ and $\eta$  is defined by 
$$\mathcal{G}=\mathcal{G}(\eta, \chi):= \sum_{t\in \mathbb F_q^*} \eta(t) \chi(t).$$

\end{definition}

The following estimate is well-known (see \cite{LN97}):
$$|\mathcal{G}|=\sqrt{q}.$$ 

The following theorem is very useful in finding an explicit value of the Fourier transform on the homogeneous variety.  
 \begin{lemma}\label{SGa} Let $\mathcal{G}$ denote the standard Gauss sum. Then we have
 $$ \mathcal{G}^2=\mathcal{G}(\eta, \chi)^2=\eta(-1) q.$$
 \end{lemma}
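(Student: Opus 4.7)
The plan is to deduce the identity from the known modulus bound $|\mathcal{G}| = \sqrt{q}$ stated just before the lemma. The key observation is that since $\eta$ takes only the real values $\pm 1$, the complex conjugate of $\mathcal{G}$ is
\[
\overline{\mathcal{G}} \;=\; \sum_{t\in \mathbb F_q^*}\eta(t)\,\overline{\chi(t)} \;=\; \sum_{t\in \mathbb F_q^*}\eta(t)\,\chi(-t).
\]
I would then make the substitution $s = -t$, which is a bijection on $\mathbb F_q^*$, and use the multiplicativity of $\eta$ to pull out a factor of $\eta(-1)$, obtaining $\overline{\mathcal{G}} = \eta(-1)\,\mathcal{G}$. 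Combining this with $|\mathcal{G}|^2 = \mathcal{G}\,\overline{\mathcal{G}} = q$ gives $\eta(-1)\mathcal{G}^2 = q$, and multiplying by $\eta(-1)$ (and using $\eta(-1)^2 = 1$) yields $\mathcal{G}^2 = \eta(-1)\,q$.

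As a self-contained alternative that avoids quoting $|\mathcal{G}| = \sqrt{q}$, I would expand the square directly:
\[
\mathcal{G}^2 \;=\; \sum_{s,t\in \mathbb F_q^*}\eta(st)\,\chi(s+t),
\]
and reparametrize by $s = tu$ with $u\in \mathbb F_q^*$, which uses $\eta(t^2) = 1$ to collapse the $t$-dependence in the character values: this converts the sum to $\sum_{t,u\in \mathbb F_q^*}\eta(u)\,\chi(t(1+u))$. I would then split on whether $u = -1$. The contribution from $u=-1$ is $\eta(-1)(q-1)$, while for $u \neq -1$ the inner sum over $t\in \mathbb F_q^*$ evaluates to $-1$ by orthogonality of the additive character $\chi$. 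The remaining outer sum $-\sum_{u\in \mathbb F_q^*,\, u\neq -1}\eta(u)$ simplifies using $\sum_{u\in \mathbb F_q^*}\eta(u) = 0$ to give $+\eta(-1)$, and adding the two contributions produces $\eta(-1)q$.

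There is no real obstacle here; the lemma is a classical identity, and both approaches are short. The only points requiring care are the sign-tracking in the conjugation step (respectively the $u = -1$ case in the direct expansion) and verifying that the change of variable $s = tu$ is a genuine bijection on $(\mathbb F_q^*)^2$, both of which are routine. I would present the conjugation argument as the primary proof because it cleanly reveals the role of $\eta(-1)$: the sign is exactly the obstruction to $\mathcal{G}$ being real, which is precisely the information encoded in $\mathcal{G}^2/|\mathcal{G}|^2$.
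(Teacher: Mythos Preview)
Your primary argument is correct and is essentially the same as the paper's: the paper also observes that $\overline{\chi(t)}=\chi(-t)$ and $\eta=\overline\eta$, deduces $\mathcal{G}=\eta(-1)\overline{\mathcal{G}}$ by the substitution $t\mapsto -t$, and then combines this with $|\mathcal{G}|=\sqrt{q}$ to conclude. Your alternative direct expansion is also correct and is a standard self-contained route, but the paper does not use it.
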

\begin{proof}
It is obvious that  $\eta =\overline{\eta}$ and $\overline{\chi(t)}= \chi(-t).$ Therefore, it is seen by a change of variables that
$$ \mathcal{G}(\eta, \chi)=\eta(-1) \overline{\mathcal{G}(\eta, \chi)}.$$ 
Hence,  $\mathcal{G}(\eta, \chi)^2= \eta(-1) |\mathcal{G}(\eta, \chi)|^2.$  Since $|\mathcal{G}(\eta, \chi)|=\sqrt{q},$ we are done.
\end{proof}

It is not hard to note that  for any  $a\in \mathbb F_q^*,$ 
$$ \sum_{t\in \mathbb F_q} \chi(at^2)=\eta(a) \mathcal{G}.$$
Completing the square and using a simple change of variables,  the above formula can be generalized to the formula below:
For any  $a\in \mathbb F_q^*$ and  any $b\in \mathbb F_q,$   we have
\begin{equation}\label{CSQ} \sum_{t\in \mathbb F_q} \chi(at^2+bt) = \eta(a) \chi\left(\frac{b^2}{-4a}\right)\mathcal{G} .\end{equation}

By a change of variables and properties of the quadratic character $\eta,$  it is not difficult to note that 
for $b\ne 0$, we have
\begin{equation}\label{Gauss}\sum_{s\in \mathbb F_q^*} \eta(s) \chi(bs^{-1})=\sum_{s\in \mathbb F_q^*} \eta(bs^{-1}) \chi(s) =\sum_{s\in \mathbb F_q^*} \eta(bs) \chi(s)=\eta(b) \mathcal{G}. 
\end{equation}

\subsection{General counting lemma}
In order to prove Theorem \ref{IRthm}, the first listed author and Rudnev  \cite{IR07} evaluated the values of the counting function $v(t)$ in \eqref{countingF} by relating it to the Fourier transform on $S_t:=\{x\in \mathbb F_q^d: ||x||=t\},$ the sphere of radius $t\in \mathbb F_q^*.$  In this subsection, we formulate their work in the general setting, which will provide an initial step in estimating  $W(r)$ in \eqref{DefW}.

Now let us work on $\mathbb F_q^n$ for an integer $n\ge 2.$
\begin{lemma} [General counting lemma] \label{GCL}
Let $P(\mathbf{x})$ be a polynomial function on $\mathbb F_q^n.$ Consider an algebraic variety 
$V:=\{\mathbf{x}\in \mathbb F_q^n: P(\mathbf{x})=0\}.$ 
Then, for every set $\mathcal{E} \subset \mathbb F_q^n,$ we have
$$ \sum_{\substack{\mathbf{x}, \mathbf{y} \in \mathcal{E}:\\
 P(\mathbf{x}-\mathbf{y})=0}} 1 = q^{2n} \sum_{\mathbf{m}\in \mathbb F_q^n} \widehat{V}(\mathbf{m}) ~|\widehat{\mathcal{E}}({\mathbf m})|^2.$$
\end{lemma}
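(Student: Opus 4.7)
The plan is to interpret the counting sum on the left as a sum of the indicator function of $V$ evaluated at differences, then apply Fourier inversion to this indicator, and finally recognize the resulting exponential sums over $\mathcal{E}$ as Fourier coefficients. This is a very standard maneuver in discrete Fourier analysis over finite fields, and the constant $q^{2n}$ on the right suggests it arises from the two factors of $q^n$ that appear when rewriting sums of $\chi(\mathbf{m}\cdot\mathbf{x})$ for $\mathbf{x}\in\mathcal{E}$ in terms of $\widehat{\mathcal{E}}(\mathbf{m})$.

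Concretely, identifying the variety $V$ with its indicator function $1_V$ (so that the constraint $P(\mathbf{x}-\mathbf{y})=0$ becomes $1_V(\mathbf{x}-\mathbf{y})$), I would rewrite the left-hand side as
\[
\sum_{\mathbf{x},\mathbf{y}\in\mathcal{E}} 1_V(\mathbf{x}-\mathbf{y}).
\]
Applying the Fourier inversion formula, $1_V(\mathbf{x}-\mathbf{y})=\sum_{\mathbf{m}\in\mathbb F_q^n}\widehat{V}(\mathbf{m})\chi(\mathbf{m}\cdot(\mathbf{x}-\mathbf{y}))$, and swapping the order of summation yields
\[
\sum_{\mathbf{m}\in\mathbb F_q^n}\widehat{V}(\mathbf{m})\left(\sum_{\mathbf{x}\in\mathcal{E}}\chi(\mathbf{m}\cdot\mathbf{x})\right)\left(\sum_{\mathbf{y}\in\mathcal{E}}\chi(-\mathbf{m}\cdot\mathbf{y})\right).
\]
By the definition of the Fourier transform, $\sum_{\mathbf{y}\in\mathcal{E}}\chi(-\mathbf{m}\cdot\mathbf{y})=q^{n}\widehat{\mathcal{E}}(\mathbf{m})$ and $\sum_{\mathbf{x}\in\mathcal{E}}\chi(\mathbf{m}\cdot\mathbf{x})=q^{n}\overline{\widehat{\mathcal{E}}(\mathbf{m})}$, so multiplying them produces $q^{2n}|\widehat{\mathcal{E}}(\mathbf{m})|^{2}$, giving the claimed identity.

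There is no real obstacle here; the identity is a direct unwinding of Fourier inversion and does not require any nontrivial estimate (the only subtle check is that $P$ is only required to be a polynomial, not a quadratic form, so nothing about $V$ beyond its being a subset of $\mathbb F_q^n$ enters the argument). The statement should be viewed as a bookkeeping device that converts the problem of counting pairs with $P(\mathbf{x}-\mathbf{y})=0$ into a spectral sum involving $\widehat{V}$, where the interesting content (the Gauss sum evaluation of $\widehat{V}$ in the quadratic homogeneous case) will be carried out in later sections.
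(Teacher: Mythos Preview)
Your argument is correct and follows exactly the same route as the paper: rewrite the count as $\sum_{\mathbf{x},\mathbf{y}\in\mathcal{E}}1_V(\mathbf{x}-\mathbf{y})$, apply Fourier inversion to $1_V$, and then recognize the resulting character sums over $\mathcal{E}$ as $q^n\widehat{\mathcal{E}}(\mathbf{m})$ and its conjugate. If anything, you spell out the final identification of the $q^{2n}$ factor more explicitly than the paper does.
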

\begin{proof} 
The proof proceeds by modifying the method of the first listed author and Rudnev \cite{IR07}. 

It follows that
$$ \mathbf{w}(0):=\sum_{\substack{\mathbf{x}, \mathbf{y} \in \mathcal{E}:\\ P(\mathbf{x}-\mathbf{y})=0}} 1=\sum_{\mathbf{x}, \mathbf{y} \in \mathbb F_q^n } V(\mathbf{x}-\mathbf{y}) \mathcal{E}(\mathbf{x}) \mathcal{E}(\mathbf{y}).$$
Applying the Fourier inversion theorem to the indicator function $V(\mathbf{x}-\mathbf{y})$, we get
$$ \mathbf{w}(0)=\sum_{\mathbf{x}, \mathbf{y} \in \mathbb F_q^n } \sum_{\mathbf{m}\in \mathbb F_q^n} \widehat{V}(\mathbf{m}) \chi(\mathbf{m}\cdot (\mathbf{x}-\mathbf{y})) \mathcal{E}(\mathbf{x}) \mathcal{E}(\mathbf{y}).$$
 Finally, applying the definition of the Fourier transform $\widehat{\mathcal{E}}(\mathbf{m}),$   the statement follows.
\end{proof}

The following is a direct consequence of the general counting lemma.
\begin{corollary} Let $Q(x)=||x||_Q$ for $x\in \mathbb F_q^d.$
Then, for every set $E\subset \mathbb F_q^d,$ we have
$$ w(0):=\sum_{\substack{x,y\in E:\\ Q(x-y)=0}} 1 =q^{2d} \sum_{m\in \mathbb F_q^d} \widehat{(S_Q)_0}(m)~ |\widehat{E}(m)|^2.$$
\end{corollary}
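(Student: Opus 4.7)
The corollary is nothing more than a direct specialization of the General Counting Lemma (Lemma \ref{GCL}), so the plan is essentially to match up the notation and verify that the hypotheses are satisfied.

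First I would set $n=d$, take the polynomial $P(\mathbf{x})$ in the General Counting Lemma to be the standard quadratic form, $P(\mathbf{x}) = Q(\mathbf{x}) = \|\mathbf{x}\|_Q$, and take $\mathcal{E} = E$. With these choices, the algebraic variety
$$V = \{\mathbf{x}\in \mathbb F_q^d : P(\mathbf{x})=0\} = \{\mathbf{x}\in \mathbb F_q^d : \|\mathbf{x}\|_Q = 0\}$$
is precisely the sphere of radius $0$, namely $(S_Q)_0$, by Definition \ref{defSQt}. So $\widehat{V}(m) = \widehat{(S_Q)_0}(m)$ for every $m \in \mathbb F_q^d$.

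Next I would observe that the counting condition matches up as well: the defining condition $P(\mathbf{x}-\mathbf{y}) = 0$ becomes $Q(x-y) = 0$, so
$$\sum_{\substack{\mathbf{x},\mathbf{y}\in \mathcal{E}:\\ P(\mathbf{x}-\mathbf{y}) = 0}} 1 \;=\; \sum_{\substack{x,y\in E:\\ Q(x-y) = 0}} 1 \;=\; w(0).$$
Plugging these identifications into the conclusion of Lemma \ref{GCL} directly yields
$$w(0) = q^{2d} \sum_{m\in \mathbb F_q^d} \widehat{(S_Q)_0}(m)\, |\widehat{E}(m)|^2,$$
which is exactly the claimed formula.

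There is no real obstacle here: the only thing to be careful about is confirming the notational identification $V = (S_Q)_0$, which is immediate from Definitions \ref{defnormQ} and \ref{defSQt}. Thus the corollary requires no new argument beyond invoking Lemma \ref{GCL} with these specific choices of $P$, $n$, and $\mathcal{E}$.
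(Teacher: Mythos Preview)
Your proposal is correct and matches the paper's own proof essentially verbatim: the paper also derives the corollary as the special case of Lemma~\ref{GCL} with $n=d$, $P=Q$, $\mathcal{E}=E$, and $V=(S_Q)_0$.
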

\begin{proof} By Definition \ref{defSQt}, recall that
$$ (S_Q)_0=\{x\in \mathbb F_q^d: Q(x)=0\}.$$
Then the statement of the corollary  immediately follows, because it is a special case of Lemma \ref{GCL} when $n=d, V=(S_Q)_0,  \mathcal{E}=E,$ and $P=Q.$
\end{proof}
\section{Sharpness of Theorem \ref{mainthm}}\label{Sharpexamplesection}
In this section, we provide  sharpness examples for the threshold results on  Theorem \ref{mainthm} (i), (ii), (iii).
In order to construct such sets, it is enough to use the standard quadratic forms in Lemma \ref{lemQ} or Definition \ref{defnormQ}. This is because any non-degenerate quadratic forms that are equivalent yield  the same result on the distance type problem. 

\subsection{The proof of sharpness of Theorem \ref{mainthm} (i)}
Let the dimension $d$ be even.  When $d\ge 4,$  we define a set $E_1 \subset \mathbb F_q^{d-2}$ as 
$$ E_1:=\left\{(t_1, t_1, \ldots, t_i, t_i, \ldots, t_{(d-2)/2}, t_{(d-2)/2}) \in \mathbb F_q^{d-2}:  t_i\in \mathbb F_q,  1\le i\le (d-2)/2\right\}.$$
Now if $d\ge 4$ is even, then define $E= E_1\times \mathbb F_q\times \{0\}=\{(x', x_{d-1}, 0)\in \mathbb F_q^d:  x'\in E_1,  x_{d-1}\in \mathbb F_q\},$ and if $d=2,$ then define $E=\mathbb F_q\times \{0\}.$
Since $d$ is even,   the standard distance function is given by the form
$$Q(x)=||x||_Q= x_1^2-x_2^2 + \cdots + x_{d-1}^2 - \varepsilon x_d^2.$$  
Now notice that
$ |E|=|E_1| |\mathbb F_q|= q^{d/2}$ and  $\Delta_Q(E)=\{(a-b)^2: a, b\in \mathbb F_q\} = (\mathbb F_q)^2.$ 
Since the ratio of non-zero square numbers is also a square number in $\mathbb F_q$,  it follows that $\frac{\Delta_Q(E)}{\Delta_Q(E)}=(\mathbb F_q)^2.$ 
Since $(\mathbb F_q)^2  \subsetneq \mathbb F_q$,  there is $r\in \mathbb F_q^*$ and a set $E\subset \mathbb F_q^d$ with $|E|=q^{d/2}$  such that $r\notin \frac{\Delta_Q(E)}{\Delta_Q(E)},$ namely, $W(r)=0.$

In short, the set $E$ provides the desired sharpness example.
Notice that, in our construction of the set $E$, we did not impose any specific assumptions on the underlying finite field $\mathbb F_q.$

\subsection{The proof of sharpness of Theorem \ref{mainthm} (ii)}
Since $d\ge 3$ is odd, we adopt the following standard distance function in Definition \ref{defnormQ}:
\begin{equation}\label{StaN}Q(x)=||x||_Q= x_1^2-x_2^2 + \cdots + x_{d-2}^2-x_{d-1}^2  + \varepsilon x_d^2.\end{equation}
We define a set $H \subset \mathbb F_q^{d-1}$ as 
\begin{equation}\label{defHK} H:=\left\{(t_1, t_1, \ldots, t_i, t_i, \ldots, t_{(d-1)/2}, t_{(d-1)/2}) \in \mathbb F_q^{d-1}: t_i\in \mathbb F_q,  1\le i\le (d-1)/2\right\}.\end{equation}
It is clear that $|H|=q^{(d-1)/2}$ and  the Cartesian product of two sets $H$ and $A\subset \mathbb F_q$ is defined by $H\times A:=\{(x',x_{d})\in \mathbb F_q^d: x'\in H,\ x_d\in A\}.$ Letting $E=H\times A,$  we see that  $\Delta_Q(E)=\{\varepsilon (a-b)^2: a, b\in A\}$ and so 
$ \frac{\Delta_Q(E)}{\Delta_Q(E)}=\left\{\frac{(a-b)^2}{(a'-b')^2}\in \mathbb F_q: a, b, a',b'\in A,\  a'\ne b'\right\} \subset (\mathbb F_q)^2.$

\smallskip

Let $q=p^{\ell}$ for an integer $\ell \geq 1$ and  an odd prime $p.$ Then one can consider the finite field $\mathbb{F}_q$ as an $\ell$-dimensional vector space over $\mathbb{F}_p$ with a basis $\left\{1,\theta,\dots,\theta^{\ell-1}\right\}$, where $\theta$ is an algebraic element of degree $\ell$ over $\mathbb{F}_p$. For every $0<\delta<1/2$, we can choose an arithmetic progression $B_{\delta}\subset \mathbb{F}_p$ with $|B_{\delta}|\sim p^{1/2-\delta}$. Let us define the set $A_{\delta}:=\left\{b_0+b_1\theta+\dots+b_{\ell-1}\theta^{\ell-1}: b_0,\dots,b_{\ell-1}\in B_{\delta}\right\}\subset \mathbb{F}_q$ and it is easy to verify that $|A_{\delta}|=|B_{\delta}|^{\ell}\sim p^{\ell/2-\delta\ell}=q^{1/2-\delta}$. If we consider the difference set of $A_{\delta}$ which is defined as $$A_{\delta}-A_{\delta}:=\left\{(b_0-b_0')+(b_1-b_1')\theta+\dots+(b_{\ell-1}-b_{\ell-1}')\theta^{\ell-1}: b_0,b_0',\dots,b_{\ell-1},b_{\ell-1}'\in B_{\delta}\right\},$$ then  $|A_{\delta}-A_{\delta}|=|B_{\delta}-B_{\delta}|^{\ell}\sim |B_{\delta}|^{\ell}\sim p^{\ell/2-\delta\ell}$. Here we have used the fact that $|B_{\delta}-B_{\delta}|\sim |B_{\delta}|$ since $B_{\delta}$ is an arithmetic progression. Therefore, we have shown that $|A_{\delta}-A_{\delta}|\sim |A_{\delta}|$. 

\smallskip

Set $E_{\delta}=H\times A_{\delta}\subset \mathbb{F}_q^d$. Then $|E_{\delta}|=|H||A_{\delta}|\sim q^{\frac{d}{2}-\delta}$. Observe that $\Delta_Q(E_{\delta})=\left\{\varepsilon(a-a')^2\in \mathbb{F}_q: a,a'\in A_{\delta}\right\}$. Since $|A_{\delta}-A_{\delta}|\sim |A_{\delta}|$, we see that $|\Delta_Q(E_{\delta})|\sim |A_{\delta}|$. Since $\left|\frac{\Delta_Q(E_{\delta})}{\Delta_Q(E_{\delta})}\right|\leq |\Delta_Q(E_{\delta})|^2\sim q^{1-2\delta}$ and $|(\mathbb{F}_q)^2|=\frac{q+1}{2}$, we conclude that $\frac{\Delta_Q(E_{\delta})}{\Delta_Q(E_{\delta})}\subsetneq (\mathbb{F}_q)^2$. Thus, for every $0<\delta<1/2$, there exists $E_{\delta}\subset \mathbb{F}_q^d$ with $|E_{\delta}|\sim q^{\frac{d}{2}-\delta}$ such that $W(r)=0$ for some non-zero $r\in (\mathbb{F}_q)^2$. Since $\delta$ can be taken as an arbitrary small positive number, the threshold $d/2$ cannot be smaller.

\subsection{The proof of sharpness of Theorem \ref{mainthm} (iii)}
Suppose that $d$ is odd.  Then, we can also use the standard distance function in \eqref{StaN}.
Let $H\subset \mathbb F_q^{d-1}$ be the set defined as in \eqref{defHK}.

Set $E:= H\times \mathbb F_q\subset \mathbb F_q^d.$  Then it is not hard to see that
$ |E|=q^{\frac{d+1}{2}}$ and $\Delta_Q(E)=\{\varepsilon (a-a')^2: a, a'\in \mathbb F_q\}.$
Also notice that 
$$\frac{\Delta_Q(E)}{\Delta_Q(E)}=\left\{\frac{ (a-a')^2}{(b-b')^2}: a,a', b, b'\in \mathbb F_q, b\ne b'\right\}=(\mathbb F_q)^2.$$
Clearly, there is  $r\in \mathbb F_q^*$ such that  $r\notin \frac{\Delta_Q(E)}{\Delta_Q(E)},$ i.e., $W(r)=0.$
Thus, the set $E$ guarantees the sharpness of Theorem \ref{mainthm} (iii).

\section{Proof of the main theorem (Theorem \ref{mainthm})} \label{ProofMthm}
In this section we start proving Theorem \ref{mainthm}. We may assume that $Q(x)=||x||_{Q}$ and $Q^*(m)=||m||_{Q^*}$ for $x, m\in \mathbb F_q^d,$ where $||x||_{Q}$ and $||m||_{Q^*}$ are defined as in Definition \ref{defnormQ}.
Let $E\subset \mathbb F_q^d.$ For each $r\in \mathbb F_q^*,$ we begin with estimating  $W(r)$ defined as in \eqref{DefW}.
First observe that  $W(r)$ can be written as
$$ W(r)=\sum_{\substack{x,y, z, w\in E:\\ Q(x-y)=rQ(z-w)}}1 - \sum_{\substack{x,y, z, w\in E:\\ Q(x-y)=0=Q(z-w)}} 1=: M(r)-w^2(0),$$ 
where $w(0)$ is defined by
\begin{equation}\label{defw0} w(0)=\sum_{\substack{\alpha, \beta\in E:\\ Q(\alpha-\beta)=0}} 1.\end{equation}

To estimate $M(r),$ the first sum above, we write it as
$$ M(r)=\sum_{\substack{(x,z), (y,w) \in E\times E:\\ Q(x-y)-rQ(z-w)=0}} 1.$$
We now relate $M(r)$ to an estimate on a homogeneous variety of degree two in $\mathbb F_q^{2d}.$
To this end, we need to introduce the following definition.
\begin{definition} 
Let $r\in \mathbb F_q^*, Q(x)=||x||_Q,$ and $Q^*(m)=||m||_{Q^*}$ for $x, m\in \mathbb F_q^d.$   Then, for $x, x', m, m'\in \mathbb F_q^d,$  we define 
$$ \mathcal{Q}_r (x, x'):= Q(x)-rQ(x') \quad \mbox{and}\quad \mathcal{Q}_r^*(m, m'):=Q^*(m)-r^{-1}Q^*(m').$$
In addition,  we define algebraic varieties $V_{\mathcal{Q}_r} $ and $V_{\mathcal{Q}_r^*}$ lying in $\mathbb F_q^{2d}$ as follows:
$$ V_{\mathcal{Q}_r}:= \{X\in \mathbb F_q^{2d}: \mathcal{Q}_r(X)=0\} \quad \mbox{and}\quad V_{\mathcal{Q}_r^*}:=\{M\in \mathbb F_q^{2d}: \mathcal{Q}_r^*(M)=0\}.$$
\end{definition}

Letting $X=(x,z), Y=(y, w)\in E\times E$ and using the notation in the above definition, we have
\begin{equation}\label{equK1} M(r)= \sum_{\substack{X, Y\in E\times E:\\ \mathcal{Q}_r(X-Y)=0}}1=q^{4d} \sum_{M\in \mathbb F_q^{2d}} \widehat{V_{\mathcal{Q}_r}}(M) |\widehat{E\times E} (M)|^2,\end{equation}
where the last equality follows from the general counting lemma (see Lemma \ref{GCL}).
We will invoke the following explicit formula for $\widehat{V_{\mathcal{Q}_r}}(M),$ whose proof will be given in the following section. For a moment, we accept the proposition below.
\begin{proposition} \label{propFV} For $x=(x_1, \ldots, x_d), m=(m_1, \ldots, m_d)\in \mathbb F_q^d,$ let
$Q(x)=||x||_Q$ and $Q^*(m)=||m||_{Q^*},$  where $||x||_Q$ and $||m||_{Q^*}$ are defined as in Definition \ref{defnormQ}. 
Then, for each $r\in \mathbb F_q^*$ and $M\in \mathbb F_q^{2d},$  the Fourier transform $\widehat{V_{\mathcal{Q}_r}}(M)$ of the indicator function of the variety $V_{\mathcal{Q}_r}$ in $\mathbb F_q^{2d}$ is as follows:
\begin{itemize}
\item [(i)] Let $d$ be even. Then we have
$$ \widehat{V_{\mathcal{Q}_r}}(M) = \frac{\delta_0(M)}{q} + \frac{V_{\mathcal{Q}_r^*}(M)}{q^d} -\frac{1}{q^{d+1}}.$$

\item [(ii)] Let $d$ be odd. Then we have
$$ \widehat{V_{\mathcal{Q}_r}}(M) = \frac{\delta_0(M)}{q} + \frac{\eta(r)V_{\mathcal{Q}_r^*}(M)}{q^d}-\frac{\eta(r)}{q^{d+1}} .$$
\end{itemize}
\end{proposition}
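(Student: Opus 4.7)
The plan is to start from the Fourier inversion definition
$$\widehat{V_{\mathcal{Q}_r}}(M) = q^{-2d}\sum_{X\in\mathbb{F}_q^{2d}}\chi(-M\cdot X)\,V_{\mathcal{Q}_r}(X)$$
and expand the indicator via additive character orthogonality as $V_{\mathcal{Q}_r}(X)=q^{-1}\sum_{s\in\mathbb{F}_q}\chi(s\mathcal{Q}_r(X))$. Pulling the $s$-sum outside and writing $X=(x,x')$, $M=(m,m')$ produces an expression of the form
$$\widehat{V_{\mathcal{Q}_r}}(M)=q^{-2d-1}\sum_{s\in\mathbb{F}_q}\Bigl(\sum_{x}\chi(sQ(x)-m\cdot x)\Bigr)\Bigl(\sum_{x'}\chi(-srQ(x')-m'\cdot x')\Bigr).$$
The $s=0$ contribution is immediately $\delta_0(M)/q$ by orthogonality of characters, which yields the first term in both cases.

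For each fixed $s\in\mathbb{F}_q^*$, the inner double sum factors as $\Phi(s;m)\,\Phi(-sr;m')$, where $\Phi(s;m):=\sum_{x\in\mathbb{F}_q^d}\chi(sQ(x)-m\cdot x)$. Since $Q$ is in the diagonal standard form of Definition~\ref{defnormQ}, $\Phi(s;m)$ further factors as a product of $d$ one-dimensional Gauss-type sums $\sum_{t}\chi(sa_i t^2 - m_i t)$, each evaluated by the completing-the-square identity \eqref{CSQ}. This gives
$$\Phi(s;m)=\sigma_Q\,\mathcal{G}^{d}\,\eta(s)^{d}\,\chi\!\left(-\frac{Q^{*}(m)}{4s}\right),$$
where $\sigma_Q:=\prod_{i=1}^d\eta(a_i)$ and where $Q^{*}$ appears precisely because the identity $\sum_i a_i^{-1}m_i^2=Q^{*}(m)$ is forced by Definition~\ref{defnormQ} (the last diagonal coefficient being $\varepsilon$ is inverted to $\varepsilon^{-1}$). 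Forming $\Phi(s;m)\,\Phi(-sr;m')$ and collecting, the $\chi$-exponent collapses to $-\mathcal{Q}_r^{*}(M)/(4s)$, while the prefactor simplifies through $\sigma_Q^{2}=1$, $\eta(s)^{2d}=1$ on $\mathbb{F}_q^*$, $\eta(s)^{d}\eta(-sr)^{d}=\eta(-1)^{d}\eta(r)^{d}$, and $\mathcal{G}^{2d}=\eta(-1)^{d}q^{d}$ by Lemma~\ref{SGa}; the net prefactor is $q^{d}\,\eta(r)^{d}$.

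Finally I would sum over $s\in\mathbb{F}_q^*$. Setting $c:=-\mathcal{Q}_r^{*}(M)/4$ and substituting $t=1/s$, the identity $\sum_{t\in\mathbb{F}_q^*}\chi(ct)=q\,\mathbf{1}_{c=0}-1$ turns the $s\neq 0$ contribution into
$$q^{d}\eta(r)^{d}\bigl(q\,V_{\mathcal{Q}_r^{*}}(M)-1\bigr),$$
since $c=0$ is equivalent to $M\in V_{\mathcal{Q}_r^{*}}$ (here $q$ is odd, so $4$ is invertible). Dividing by $q^{2d+1}$ and adding the $s=0$ term produces
$$\widehat{V_{\mathcal{Q}_r}}(M)=\frac{\delta_0(M)}{q}+\frac{\eta(r)^{d}V_{\mathcal{Q}_r^{*}}(M)}{q^{d}}-\frac{\eta(r)^{d}}{q^{d+1}},$$
from which (i) and (ii) follow since $\eta(r)^{d}=1$ for $d$ even and $\eta(r)^{d}=\eta(r)$ for $d$ odd. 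The only real obstacle is bookkeeping: one must track carefully that the $\sigma_Q$ factor squares away, that the parity factors combine to the clean $q^{d}\eta(r)^{d}$ above, and most importantly that the $\chi$-exponent assembled from the two $\Phi$ factors matches $\mathcal{Q}_r^{*}(M)/(4s)$ up to sign. This matching is exactly what Definition~\ref{defnormQ} was designed to provide; once it is noticed, both parities of $d$ are handled by the single computation.
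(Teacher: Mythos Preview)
Your proof is correct and follows essentially the same route as the paper: both expand the indicator of $V_{\mathcal{Q}_r}$ via the orthogonality relation, split off $s=0$, evaluate the $s\neq 0$ contribution using the completing-the-square formula~\eqref{CSQ}, and then use $\mathcal{G}^{2}=\eta(-1)q$ to collapse the prefactors. The only organizational difference is that the paper first isolates this computation as a general lemma (Lemma~\ref{FTHa}) about arbitrary diagonal homogeneous varieties $H_{\mathbf a}$ in $\mathbb{F}_q^{n}$ and then applies part~(i) of that lemma with $n=2d$ and coefficient vector $\mathbf a=(\mathbf a',\mathbf a'')$ built from $Q$ and $-rQ$, whereas you compute directly and factor the $2d$-dimensional inner sum as $\Phi(s;m)\,\Phi(-sr;m')$; this is the same calculation presented slightly differently.
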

After replacing $\widehat{V_{\mathcal{Q}_r}}(M)$ in \eqref{equK1} by its explicit value given in Proposition \ref{propFV}, we can easily get 
$$ M(r)=\frac{|E|^4}{q} + q^{3d} \sum_{M\in V_{\mathcal{Q}_r^*}} |\widehat{E\times E}(M)|^2 -q^{d-1}|E|^2 \quad\mbox{for}~d\ge 2~\mbox{even},$$
and
$$ M(r)=\frac{|E|^4}{q} + q^{3d} \eta(r) \sum_{M\in V_{\mathcal{Q}_r^*}} |\widehat{E\times E}(M)|^2 -q^{d-1}\eta(r)|E|^2 \quad\mbox{for}~d\ge 3~\mbox{odd}.$$

We also need the following proposition to estimate the quantity $w(0)$ in \eqref{defw0}. 
\begin{proposition} \label{lem2.3K} Let $E\subset \mathbb F_q^d.$ For $t\in \mathbb F_q$, we define $w(t)$ as the number of pairs $(x,y)\in E\times E$ such that $||x-y||_Q=t,$ namely, 
$$ w(t):=\sum_{\substack{x,y\in E:\\ ||x-y||_Q=t}}1,$$
where  $||\cdot||_{Q}$ is the standard distance function in Definition \ref{defnormQ}. Then the following statements hold true:

\begin{itemize}
\item [(i)] If $d$ is even and $\eta\left(\varepsilon \right)=1,$ then we have 
$$ 0\le w(0) \le  \frac{|E|^2}{q} +  q^{\frac{3d}{2}} \sum_{m\in (S_{Q^*})_0} |\widehat{E}(m)|^2.$$

\item [(ii)] If $d$ is even and $\eta\left(\varepsilon \right)=-1,$ then we have 
$$ 0\le w(0) \le  \frac{|E|^2}{q} +  q^{\frac{d-2}{2}} |E|.$$

\item [(iii)] If $d$ is odd, then we have
$$ 0\le w(0) \le  \frac{|E|^2}{q} +  q^{\frac{d-1}{2}} |E|.$$
\end{itemize}
\end{proposition}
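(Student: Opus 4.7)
The plan is to apply the general counting lemma (Lemma \ref{GCL}) with $n=d$, $\mathcal{E}=E$, $V=(S_Q)_0$, and $P(x)=||x||_Q$. This converts $w(0)$ into the Fourier-theoretic identity
$$w(0)=q^{2d}\sum_{m\in \mathbb F_q^d}\widehat{(S_Q)_0}(m)\,|\widehat{E}(m)|^2,$$
so the problem reduces to computing $\widehat{(S_Q)_0}(m)$ explicitly and then estimating the resulting sum in each parity regime.

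To evaluate $\widehat{(S_Q)_0}(m)$, I would use the orthogonality identity $(S_Q)_0(x)=q^{-1}\sum_{s\in\mathbb F_q}\chi(s\,||x||_Q)$ (identifying the set with its indicator, as in the paper's convention) inside the definition of the Fourier transform. The $s=0$ contribution immediately yields $\delta_0(m)/q$. For each fixed $s\ne 0$, the sum over $x\in \mathbb F_q^d$ factors coordinate-wise because $||\cdot||_Q$ is diagonal, and each one-dimensional factor is a quadratic exponential sum evaluated by \eqref{CSQ}. Multiplying the $d$ resulting factors, using the coefficient product $(-1)^{d/2}\varepsilon$ in the even case and $(-1)^{(d-1)/2}\varepsilon$ in the odd case together with $\mathcal{G}^2=\eta(-1)q$ from Lemma \ref{SGa}, I expect the closed forms
$$\widehat{(S_Q)_0}(m)=\frac{\delta_0(m)}{q}+\frac{\eta(\varepsilon)}{q^{d/2}}(S_{Q^*})_0(m)-\frac{\eta(\varepsilon)}{q^{d/2+1}}\quad(d\text{ even}),$$
and, after invoking \eqref{Gauss} to resolve the surviving inner sum $\sum_{s\ne 0}\eta(s)\chi(-||m||_{Q^*}/(4s))$,
$$\widehat{(S_Q)_0}(m)=\frac{\delta_0(m)}{q}+\frac{\eta(\varepsilon)\,\eta(||m||_{Q^*})}{q^{(d+1)/2}}\quad(d\text{ odd},\ m\notin(S_{Q^*})_0),$$
with $\widehat{(S_Q)_0}(m)=0$ for $m\in (S_{Q^*})_0\setminus\{0\}$ in the odd case.

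Substituting these into the counting identity and applying Plancherel \eqref{basicP} in the forms $\sum_m|\widehat E(m)|^2=|E|/q^d$ and $|\widehat E(0)|^2=|E|^2/q^{2d}$, the even-dimensional case collapses to the exact equality
$$w(0)=\frac{|E|^2}{q}+\eta(\varepsilon)\left[\,q^{3d/2}\sum_{m\in(S_{Q^*})_0}|\widehat E(m)|^2-q^{(d-2)/2}|E|\right].$$
When $\eta(\varepsilon)=1$, dropping the non-positive term $-q^{(d-2)/2}|E|$ yields (i); when $\eta(\varepsilon)=-1$, the Fourier sum appears with a minus sign and can be discarded by non-negativity, leaving $w(0)\le |E|^2/q+q^{(d-2)/2}|E|$, which is (ii). In the odd case the substitution produces an error of the form $q^{(3d-1)/2}\eta(\varepsilon)\sum_{m\notin(S_{Q^*})_0}\eta(||m||_{Q^*})|\widehat E(m)|^2$, which I bound trivially in absolute value by $q^{(3d-1)/2}\cdot|E|/q^d=q^{(d-1)/2}|E|$ to obtain (iii). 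The non-negativity $w(0)\ge 0$ in each case is immediate from the fact that $w(0)$ is a count.

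The main obstacle is the Gauss-sum bookkeeping that produces the precise factor $\eta(\varepsilon)$ in the even-dimensional formula: this sign is exactly what forces the asymmetric treatment of (i) versus (ii) and dictates which summand we are allowed to discard in each case. Once the explicit Fourier formulas for $\widehat{(S_Q)_0}$ are secured, each of the three estimates reduces to a single-sign argument or a trivial Plancherel bound.
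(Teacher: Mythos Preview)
Your proposal is correct and follows essentially the same route as the paper: apply Lemma~\ref{GCL} to write $w(0)=q^{2d}\sum_m\widehat{(S_Q)_0}(m)|\widehat{E}(m)|^2$, compute $\widehat{(S_Q)_0}$ explicitly via the diagonal Gauss-sum evaluation (the paper packages this as Lemma~\ref{FTHa} and Corollary~\ref{cor6.2F}, but your direct computation is the same), and then in the even case drop whichever of the two sign-paired terms is negative, while in the odd case bound the oscillating error by Plancherel. The only cosmetic difference is that in the odd case the paper first replaces $\widehat{(S_Q)_0}(m)$ by $|\widehat{(S_Q)_0}(m)|$ and then sums, whereas you keep the exact identity and bound the signed error $q^{(3d-1)/2}\eta(\varepsilon)\sum_{m\notin(S_{Q^*})_0}\eta(\|m\|_{Q^*})|\widehat{E}(m)|^2$ in absolute value; both yield the same $q^{(d-1)/2}|E|$.
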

We postpone the proof of Proposition \ref{lem2.3K} to the following section and let us accept it for a moment. 

Since $W(r)=M(r)-w^2(0),$  invoking (i),(ii),(iii) of Proposition \ref{lem2.3K} together with the estimates of $M(r)$, lower bounds of $W(r)$  are obtained as follows:
\begin{itemize}
\item [(Case A)] If $d\ge 2$ is even and $\eta(\varepsilon)=1,$ then 
\begin{equation}\label{CaseA} W(r)\ge \frac{|E|^4}{q} + q^{3d} \sum_{M\in V_{\mathcal{Q}_r^*}} |\widehat{E\times E}(M)|^2 -q^{d-1}|E|^2 - \left(\frac{|E|^2}{q} +  q^{\frac{3d}{2}} \sum_{m\in (S_{Q^*})_0} |\widehat{E}(m)|^2\right)^2.\end{equation}
\item [(Case B)] If $d\ge 2$ is even and $\eta(\varepsilon)=-1,$ then we have
\begin{equation}\label{CaseB}W(r)\ge \frac{|E|^4}{q} + q^{3d} \sum_{M\in V_{\mathcal{Q}_r^*}} |\widehat{E\times E}(M)|^2 -q^{d-1}|E|^2 -\left(\frac{|E|^2}{q} +  q^{\frac{d-2}{2}} |E|   \right)^2.\end{equation}
\item [(Case C)] If $d\ge 3$ is odd, then we have
\begin{equation}\label{CaseC} W(r)\ge \frac{|E|^4}{q} + q^{3d} \eta(r) \sum_{M\in V_{\mathcal{Q}_r^*}} |\widehat{E\times E}(M)|^2 -q^{d-1}\eta(r)|E|^2 -\left(\frac{|E|^2}{q} +  q^{\frac{d-1}{2}} |E|   \right)^2.\end{equation}
\end{itemize}

In the following subsections, we will complete the proof of Theorem \ref{mainthm} (i), (ii), (iii), by assuming that Propositions \ref{propFV} and \ref{lem2.3K} hold true.

\subsection{Proof of Theorem \ref{mainthm} (i)} Let $d\ge 2$ be even. Suppose that $E\subset \mathbb F_q^d$ with $|E|\ge 4 q^{d/2}.$ It is clear that  $\eta(\varepsilon)$ is either $1$ or $-1.$

First, we find a lower bound of $W(r)$ under the assumption of (Case A).
To do this, we  expand the last term in \eqref{CaseA} and observe that
$$ q^{3d} \sum_{M\in V_{\mathcal{Q}_r^*}} |\widehat{E\times E}(M)|^2 - \left(q^{\frac{3d}{2}} \sum_{m\in (S_{Q^*})_0} |\widehat{E}(m)|^2\right)^2 \ge 0$$ 
and $$ \sum_{m\in (S_{Q^*})_0} |\widehat{E}(m)|^2 \le \sum_{m\in \mathbb F_q^d} |\widehat{E}(m)|^2 =\frac{|E|}{q^d}.$$
Combining these observations and the inequality \eqref{CaseA},  we get
$$ W(r)\ge  \frac{|E|^4}{q} - q^{d-1}|E|^2 -\frac{|E|^4}{q^2} -2 q^{\frac{d-2}{2}}|E|^3.$$
Since $1=\frac{5}{48} + \frac{3}{48} +\frac{16}{48} +\frac{24}{48},$ we can write
$\frac{|E|^4}{q}= \frac{5}{48} \frac{|E|^4}{q}+ \frac{3}{48} \frac{|E|^4}{q}+\frac{16}{48}\frac{|E|^4}{q} +\frac{24}{48}\frac{|E|^4}{q}.$ Hence, the lower bound of $W(r)$ can be written as
$$ W(r)\ge \frac{5}{48} \frac{|E|^4}{q} + \left(\frac{3}{48} \frac{|E|^4}{q}-q^{d-1}|E|^2\right) 
+\left(\frac{16}{48}\frac{|E|^4}{q}-\frac{|E|^4}{q^2}\right) +\left(\frac{24}{48}\frac{|E|^4}{q}-2 q^{\frac{d-2}{2}}|E|^3\right).$$ 
Since $q\ge 3$ and $|E|\ge 4 q^{d/2},$  each value in parentheses above is non-negative. Hence, we obtain the desired result:
$$ W(r)\ge \frac{5}{48} \frac{|E|^4}{q}.$$

Next, let us find a lower bound of $W(r)$ under the assumption of (Case B).
We also expand the last term in \eqref{CaseB} and then observe that the sum in \eqref{CaseB} can be ignored since it is positive.
Thus, we get
$$ W(r)\ge \frac{|E|^4}{q} - q^{d-1}|E|^2 -\frac{|E|^4}{q^2} -q^{d-2}|E|^2-2 q^{\frac{d-4}{2}}|E|^3.$$
Since $1=\frac{20}{48} + \frac{3}{48} +\frac{16}{48} +\frac{1}{48}+\frac{8}{48},$  we can write
$$ W(r)\ge \frac{20}{48}\frac{|E|^4}{q} + \left( \frac{3}{48}\frac{|E|^4}{q} - q^{d-1}|E|^2\right) + \left(\frac{16}{48}\frac{|E|^4}{q}-\frac{|E|^4}{q^2}\right)+ \left(\frac{1}{48}\frac{|E|^4}{q}-q^{d-2}|E|^2\right)+ \left(\frac{8}{48}\frac{|E|^4}{q}-2 q^{\frac{d-4}{2}}|E|^3\right).$$
Since $q\ge 3$ and $|E|\ge 4q^{d/2}$,  it is not hard to notice that  each value in parentheses above is non-negative. Hence, in this case we get a better lower bound:
$$ W(r)\ge \frac{20}{48}\frac{|E|^4}{q} \ge \frac{5}{48} \frac{|E|^4}{q} .$$
This completes  the proof of the first part of Theorem \ref{mainthm}.

\subsection{Proof of Theorem \ref{mainthm} (ii)} 
Let $d\ge 3$ be odd and $E\subset \mathbb F_q^d.$ 
Assume that $r\in \mathbb F_q^*$ is a non-zero square number. Then $\eta(r)=1.$ Thus, 
it follows from \eqref{CaseC} of (Case C) that 
\begin{equation*}\label{WrK} W(r)\ge \frac{|E|^4}{q} + q^{3d}  \sum_{M\in V_{\mathcal{Q}_r^*}} |\widehat{E\times E}(M)|^2 -q^{d-1}|E|^2 -\left(\frac{|E|^2}{q} +  q^{\frac{d-1}{2}} |E|   \right)^2.\end{equation*}
Expand the above square term and notice that
$$  \sum_{M\in V_{\mathcal{Q}_r^*}} |\widehat{E\times E}(M)|^2 \ge |\widehat{E\times E}(0,\ldots, 0)|^2 = \frac{|E|^4}{q^{4d}}.$$
Then we see that 
\begin{equation}\label{WrKK} W(r)\ge \frac{|E|^4}{q} + \frac{|E|^4}{q^d} -q^{d-1}|E|^2 -\frac{|E|^4}{q^2} -q^{d-1}|E|^2-2 q^{\frac{d-3}{2}}|E|^3.\end{equation}
We now prove the statement of Theorem \ref{mainthm} (ii).
Suppose that $|E|\ge 3 q^{d/2}.$  Since the term $\frac{|E|^4}{q^d}$ in the RHS of \eqref{WrKK} is positive, we can ignore it when we find a lower bound of $W(r).$  More precisely, we have 
\begin{equation}\label{equFirdavs} W(r)\ge \frac{|E|^4}{q} -2q^{d-1}|E|^2 -\frac{|E|^4}{q^2} -2 q^{\frac{d-3}{2}}|E|^3.\end{equation}
Since $1=\frac{2}{45}+\frac{10}{45}+\frac{15}{45}+\frac{18}{45},$   we can write
$\frac{|E|^4}{q} = \frac{2}{45}\frac{|E|^4}{q}+\frac{10}{45}\frac{|E|^4}{q}+\frac{15}{45}\frac{|E|^4}{q}+\frac{18}{45}\frac{|E|^4}{q}.$  Therefore,   \eqref{equFirdavs} becomes 
$$ W(r)\ge \frac{2}{45}\frac{|E|^4}{q} + \left(\frac{10}{45}\frac{|E|^4}{q}-2q^{d-1}|E|^2\right)+\left(\frac{15}{45}\frac{|E|^4}{q} -\frac{|E|^4}{q^2}\right)+ \left(\frac{18}{45}\frac{|E|^4}{q} -2 q^{\frac{d-3}{2}}|E|^3\right).$$
Since $q\ge 3$ and $|E|\ge 3 q^{d/2}$,  each value in parentheses above is non-negative. Thus, the required estimate is obtained:
$$ W(r)\ge \frac{2}{45}\frac{|E|^4}{q}.$$


\subsection{Proof of Theorem \ref{mainthm} (iii)}
Suppose that $d$ is odd and  $r\in \mathbb F_q^*$ is not a square number, namely, $\eta(r)=-1.$ By \eqref{CaseC} of (Case C),  
$$W(r)\ge \frac{|E|^4}{q} - q^{3d} \sum_{M\in V_{\mathcal{Q}_r^*}} |\widehat{E\times E}(M)|^2 +q^{d-1}|E|^2 -\left(\frac{|E|^2}{q} +  q^{\frac{d-1}{2}} |E|  \right)^2.$$
Expand the last square term above and note that 
$$0\le \sum_{M\in V_{\mathcal{Q}_r^*}} |\widehat{E\times E}(M)|^2 \le \sum_{M\in \mathbb F_q^{2d}} |\widehat{E\times E}(M)|^2=\frac{|E|^2}{q^{2d}}. $$
When $r\in \mathbb F_q^*$ is non-square, we therefore get
\begin{equation}\label{Com1} W(r)\ge \frac{|E|^4}{q}-q^d |E|^2+q^{d-1}|E|^2 -\frac{|E|^4}{q^2} -q^{d-1}|E|^2-2 q^{\frac{d-3}{2}}|E|^3.\end{equation}
On the other hand, when $r\in \mathbb F_q^*$ is a square number, we already know from \eqref{WrKK} that 
\begin{equation}\label{Com2} W(r)\ge \frac{|E|^4}{q} + \frac{|E|^4}{q^d} -q^{d-1}|E|^2 -\frac{|E|^4}{q^2} -q^{d-1}|E|^2-2 q^{\frac{d-3}{2}}|E|^3.\end{equation}
Now let us compare the above two lower bounds of $W(r).$ 
It suffices to compare only the second and third terms of them since the rest of the terms are equal. Since $q\ge 3,$ it is not hard to see  that 
$$-q^d |E|^2+q^{d-1}|E|^2 < \frac{|E|^4}{q^d} -q^{d-1}|E|^2.$$
This implies that  the lower bound of $W(r)$ in \eqref{Com1} is much smaller than that in \eqref{Com2}.
Thus, the inequality \eqref{Com1} holds for all $r\in \mathbb F_q^*.$ In other words, for all $r\in \mathbb F_q^*,$
\begin{equation}\label{Allr} W(r)\ge \frac{|E|^4}{q}-q^d |E|^2 -\frac{|E|^4}{q^2} -2 q^{\frac{d-3}{2}}|E|^3.\end{equation}
Now we are ready to finish the proof of Theorem \ref{mainthm} (iii). Assume that $|E|\ge \frac{11}{6} q^{\frac{d+1}{2}}.$ 
Since $1=\frac{2}{363} + \frac{108}{363}+ \frac{121}{363}+\frac{132}{363},$ we can write 
$$ \frac{|E|^4}{q} = \frac{2}{363} \frac{|E|^4}{q} + \frac{108}{363}\frac{|E|^4}{q}+ 
\frac{121}{363}\frac{|E|^4}{q}+  \frac{132}{363}\frac{|E|^4}{q}.$$
Combining this with the above inequality \eqref{Allr}, we get
$$ W(r)\ge \frac{2}{363} \frac{|E|^4}{q} + \left( \frac{108}{363}\frac{|E|^4}{q}-q^d |E|^2\right) 
+ \left(\frac{121}{363}\frac{|E|^4}{q}-\frac{|E|^4}{q^2}\right)
+ \left( \frac{132}{363}\frac{|E|^4}{q}-2 q^{\frac{d-3}{2}}|E|^3 \right). $$
Since $q\ge 3$ and $|E|\ge \frac{11}{6} q^{(d+1)/2},$  we see from a direct computation  that
each value in parentheses above is non-negative. Hence, we obtain that
$W(r)\ge \frac{2}{363} \frac{|E|^4}{q},$ which proves Theorem \ref{mainthm} (iii).

\section{Proof of Propositions \ref{propFV} and \ref{lem2.3K}}
To efficiently prove the propositions, we begin by deducing a preliminary lemma, regarding the Fourier transform on the diagonal homogeneous variety of degree two in $\mathbb F_q^n.$
Denote $\mathbf{x}=(x_1, \ldots, x_n)$, $\mathbf{m}=(m_1, \ldots, m_n)\in \mathbb F_q^n.$
Let  $\mathbf{a}=(a_1,\ldots, a_n)\in \mathbb F_q^n$ such that 
$a_j\ne 0$ for all $j=1, 2, \ldots, n.$  Consider an algebraic variety 
$$ H_{\mathbf{a}}:=\left\{\mathbf{x}\in \mathbb F_q^n: \sum_{j=1}^n a_j x_j^2=0\right\}.$$
We call this variety $H_{\mathbf{a}}$ the diagonal homogeneous variety of degree two with a coefficient vector $\mathbf{a}$ in $\mathbb F_q^n.$

We also define the dual variety  of $H_{\mathbf{a}}$, denoted by $H_{\mathbf{a}^*}$, as 
$$H_{\mathbf{a}^*}:= \left\{\mathbf{m}\in \mathbb F_q^n: \sum_{j=1}^n a_j^{-1} m_j^2=0\right\}.$$

\begin{lemma}\label{FTHa} Let $H_{\mathbf{a}} $ be the diagonal homogeneous variety of degree two with a coefficient vector $\mathbf{a}$ in $\mathbb F_q^n.$ Then, for $\mathbf{m}\in \mathbb F_q^n,$  the Fourier transform on $H_{\mathbf{a}}$ is given as follows:
\begin{itemize}
\item [(i)] If $n$ is even, then 
$$ \widehat{H_{\mathbf{a}}}(\mathbf{m})
=q^{-1}\delta_0(\mathbf{m})+q^{-\frac{n}{2}} \eta\left( (-1)^{n/2} \prod_{j=1}^n a_j\right) H_{\mathbf{a}^*}(\mathbf{m})-q^{-(n+2)/2} \eta\left((-1)^{n/2}\prod_{j=1}^n a_j\right). $$

\item [(ii)] If $n$ is odd, then 
$$ \widehat{H_{\mathbf{a}}}(\mathbf{m})
=\left\{\begin{array}{ll} q^{-1} \delta_0(\mathbf{m}) \quad&\mbox{if}~~\mathbf{m}\in H_{\mathbf{a}^*},\\
q^{-\frac{n+1}{2}} \eta\left( (-1)^{(n+3)/2} \prod\limits_{j=1}^n a_j\right) \eta\left(\sum\limits_{j=1}^n a_j^{-1} m_j^2\right)\quad&\mbox{if}~~\mathbf{m}\notin H_{\mathbf{a}^*}.\end{array}\right.$$
\end{itemize}
\end{lemma}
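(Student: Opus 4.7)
The plan is to express $H_{\mathbf{a}}(\mathbf{x})$ as an average of additive characters over the single scalar constraint $\sum_j a_j x_j^2 = 0$, then collapse the resulting $n$-fold sum into a product of one-dimensional completed-square Gauss sums. More precisely, by orthogonality
\[
H_{\mathbf{a}}(\mathbf{x}) \;=\; q^{-1} \sum_{s \in \mathbb F_q} \chi\!\Bigl(s\sum_{j=1}^n a_j x_j^2\Bigr),
\]
so that after substituting into the definition of $\widehat{H_{\mathbf{a}}}(\mathbf{m})$ and swapping sums one obtains
\[
\widehat{H_{\mathbf{a}}}(\mathbf{m})
\;=\; q^{-(n+1)} \sum_{s\in\mathbb F_q} \prod_{j=1}^n \sum_{x_j\in \mathbb F_q} \chi\!\bigl(s a_j x_j^2 - m_j x_j\bigr).
\]
The $s=0$ term contributes precisely $q^{-1}\delta_0(\mathbf{m})$, leaving the nontrivial contribution to come from $s\neq 0$.

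For $s\neq 0$, I would apply the completed-square identity \eqref{CSQ} to each inner sum, obtaining a factor of $\eta(sa_j)\,\chi(-m_j^2/(4sa_j))\,\mathcal{G}$. Multiplying across $j$ gives
\[
\prod_{j=1}^n \sum_{x_j}\chi(\cdots)
\;=\; \eta(s)^n\,\eta\!\Bigl(\prod_{j=1}^n a_j\Bigr)\, \mathcal{G}^n\,\chi\!\left(-\frac{c(\mathbf{m})}{4s}\right),
\qquad c(\mathbf{m}):=\sum_{j=1}^n a_j^{-1}m_j^2,
\]
so that the entire $s\neq 0$ contribution reduces to $\eta\!\bigl(\prod a_j\bigr)\mathcal{G}^n$ times a one-variable exponential sum in $s$. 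Here I would invoke Lemma~\ref{SGa} to evaluate $\mathcal{G}^n$: for even $n$, $\mathcal{G}^n = \eta((-1)^{n/2})q^{n/2}$, and for odd $n$, $\mathcal{G}^n = \eta((-1)^{(n-1)/2})q^{(n-1)/2}\mathcal{G}$.

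The two cases then diverge according to the parity of $n$. In the even case $\eta(s)^n = 1$, so the remaining sum $\sum_{s\neq 0}\chi(-c(\mathbf{m})/(4s))$ is evaluated by orthogonality: it equals $q-1$ when $\mathbf{m}\in H_{\mathbf{a}^*}$ (i.e.\ $c(\mathbf{m})=0$) and $-1$ otherwise. Packaging the two outputs into a single formula gives the claimed expression $q^{-1}\delta_0(\mathbf{m}) + q^{-n/2}\eta(\cdots)H_{\mathbf{a}^*}(\mathbf{m}) - q^{-(n+2)/2}\eta(\cdots)$. In the odd case $\eta(s)^n=\eta(s)$, and $\sum_{s\neq 0}\eta(s)\chi(-c(\mathbf{m})/(4s))$ vanishes when $\mathbf{m}\in H_{\mathbf{a}^*}$ and otherwise equals $\eta(-c(\mathbf{m})/4)\mathcal{G}$ by the identity \eqref{Gauss}; absorbing the extra factor of $\mathcal{G}$ into $\mathcal{G}^{n+1}=\eta((-1)^{(n+1)/2})q^{(n+1)/2}$ and simplifying $\eta(-1/4)=\eta(-1)$ produces the stated form with $\eta((-1)^{(n+3)/2}\prod a_j)\,\eta(c(\mathbf{m}))$.

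The only real obstacle is the sign bookkeeping: I must carefully track the parity of $\eta(-1)$ coming from $\mathcal{G}^2 = \eta(-1)q$ raised to various half-integer powers, and combine it with the $\eta(-1)$ arising from $\eta(-c/4) = \eta(-1)\eta(c)$ and with $\eta(\prod a_j)$, all while confirming that $\eta(1/4)=\eta(4)=\eta(2)^2=1$ removes the factor of $4$. Once these are lined up correctly the two cases of the lemma follow from a direct substitution; the argument is structurally a straightforward character calculation and no further ideas are needed.
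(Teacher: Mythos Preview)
Your proposal is correct and follows essentially the same route as the paper's own proof: orthogonality to introduce the $s$-sum, separation of $s=0$, completion of the square via \eqref{CSQ} in each coordinate, and then evaluation of the residual one-variable sum in $s$ by orthogonality (even $n$) or by \eqref{Gauss} (odd $n$), with Lemma~\ref{SGa} handling the powers of $\mathcal{G}$. The sign bookkeeping you flag is indeed the only delicate point, and your outline handles it exactly as the paper does.
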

\begin{proof} The proof uses the standard orthogonality argument of characters and Gauss sum estimates. It follows that 
\begin{align*}
\widehat{H_{\mathbf{a}}}(\mathbf{m})&= q^{-n} \sum_{x\in H_{\mathbf{a}} } \chi(\mathbf{m}\cdot \mathbf{x})\\
&=q^{-n} \sum_{\mathbf{x}\in {\mathbb F_q^n}}\left(q^{-1}\sum_{s\in {\mathbb F_q}} \chi\left( s(a_1x_1^2+\cdots+ a_nx_n^2)\right)\right)~\chi(\mathbf{m}\cdot \mathbf{x})\\
&= q^{-1} \delta_0(\mathbf{m})+  q^{-n-1}\sum_{\mathbf{x}\in {\mathbb F_q^n}}\sum_{s\neq 0} \chi\left( s(a_1x_1^2+\cdots+ a_n x_n^2)\right)~\chi(\mathbf{m}\cdot \mathbf{x})\\
&=q^{-1} \delta_0(\mathbf{m})+ q^{-n-1} \sum_{s\neq 0} \prod_{j=1}^n \sum_{x_j\in {\mathbb F_q}} \chi(sa_j x_j^2+m_jx_j).
\end{align*}
Summing over $x_j\in {\mathbb F_q}$ by the formula \eqref{CSQ}, we get
\begin{equation}\label{maineqKF} \widehat{H_{\mathbf{a}}}(\mathbf{m})=q^{-1}\delta_0(\mathbf{m})+ q^{-n-1} \mathcal{G}^n \eta(a_1\cdots a_n) \sum_{s\neq 0} \eta^n(s) \chi\left( -\frac{1}{4s}\left(\frac{m_1^2}{a_1}+\cdots+\frac{m_n^2}{a_n}\right)\right).\end{equation}

\begin{itemize}
\item [(i)] Suppose that $n$ is even. Since $\eta^{n}(s)=1,$ the sum over $s\ne 0$ is $(qH_{\mathbf{a}^*}(\mathbf{m})-1)$ by  application of the orthogonality of $\chi.$ 
By Lemma \ref{SGa}, note that $\mathcal{G}^n= (\eta(-1) q)^{n/2}=\eta( (-1)^{n/2}) q^{n/2}.$ So we obtain that
$$\widehat{H_{\mathbf{a}}}(\mathbf{m})=q^{-1}\delta_0(\mathbf{m})+ q^{-\frac{n+2}{2}} 
\eta((-1)^{n/2} a_1\cdots a_n) \left(qH_{\mathbf{a}^*}(\mathbf{m})-1\right).$$

Hence, the statement of Theorem (i) is proven.
\item [(ii)] Suppose that $n$ is odd. Then $ \eta^n=\eta$, and 
so, if 
$\frac{m_1^2}{a_1}+\cdots+\frac{m_n^2}{a_n}=0$, namely $\mathbf{m}\in H_{\mathbf{a}^*},$ then $ \widehat{H_{\mathbf{a}}}(\mathbf{m})=q^{-1}\delta_0(\mathbf{m})$ by the orthogonality of $\eta$.  On the other hand, when $\mathbf{m}\notin H_{\mathbf{a}^*},$ note by \eqref{Gauss} that the sum over $s\ne 0$ in \eqref{maineqKF} is equal to 
$\eta\left( -\frac{1}{4}\left(\frac{m_1^2}{a_1}+\cdots+\frac{m_n^2}{a_n}\right)\right) \mathcal{G}.$
Also note that $\eta(-4^{-1})=\eta(-1).$  Then we get
$$ \widehat{H_{\mathbf{a}}}(\mathbf{m})=q^{-n-1} \mathcal{G}^{n+1}\eta(-1) \eta(a_1\cdots a_n)\eta\left(\frac{m_1^2}{a_1}+\cdots+\frac{m_n^2}{a_n}\right) .$$
Since $\mathcal{G}^2=\eta(-1)q$ by Lemma \ref{SGa},  we see that
$$ \mathcal{G}^{n+1} \eta(-1)= (\eta(-1)q)^{(n+1)/2} \eta(-1)= \eta\left( (-1)^{(n+3)/2}\right) q^{(n+1)/2}.$$
Inserting this into the above equation, we obtain the required value of $\widehat{H_{\mathbf{a}}}(\mathbf{m}).$ 
\end{itemize}
\end{proof}

Now we are ready to prove Proposition \ref{propFV}, which will be a special case of Lemma \ref{FTHa} (i).
\subsection{Proof of Proposition \ref{propFV}}  
Let $x, x'\in \mathbb F_q^d.$ Then we see that 
$$ V_{\mathcal{Q}_r}=\{(x,x')\in \mathbb F_q^{2d}: ||x||_Q-r||x'||_Q=0\}.$$
\begin{itemize}
\item [(i)] Suppose that $d$ is even. Then using Definition \ref{defnormQ} (i), the coefficient vector of  $||x||_Q$ is 
$\mathbf{a}'$
 $:=(1, -1, \ldots, 1, -\varepsilon)\in \mathbb F_q^d$ and  that of $-r||x'||_Q$ is  $\mathbf{a}'':=(-r, r, \ldots, -r, r\varepsilon)\in \mathbb F_q^d.$ Hence, 
$V_{\mathcal{Q}_r}$ is the diagonal homogeneous variety of degree two with a coefficient vector $\mathbf{a}=(\mathbf{a}', \mathbf{a}'')$ in $\mathbb F_q^{2d}.$ Thus, with $n=2d$ even, we can apply the part (i) of Lemma \ref{FTHa}, where $\mathbf{m}=M,$  
$H_{\mathbf{a}}=V_{\mathcal{Q}_r},$  $H_{\mathbf{a}^*}=V_{\mathcal{Q}_r^*},$ and 
$$ \eta\left(\prod_{j=1}^n a_j\right)= \eta\left((-1)^d r^d \varepsilon^2\right)=1.$$
Consequently we obtain that
$$ \widehat{V_{\mathcal{Q}_r}}(M)
=q^{-1}\delta_0(M)+q^{-d} \eta((-1)^{d})\eta\left((-1)^d r^d \varepsilon^2\right) V_{\mathcal{Q}_r^*}(M)-q^{-(d+1)} \eta((-1)^{d})\eta\left((-1)^d r^d \varepsilon^2\right). $$
Since $d$ is even,  each value of the above quadratic characters $\eta$ is 1 and we complete the proof of Proposition  \ref{propFV} (i).
\item [(ii)] Suppose that $d$ is odd. By Definition \ref{defnormQ} (ii), we see that $\mathbf{a}=(\mathbf{a}', \mathbf{a}'')\in \mathbb F_q^{2d}$ such that
$\mathbf{a}'=(1, -1, \ldots, 1, -1, \varepsilon)\in \mathbb F_q^d,$ and $\mathbf{a}''=(-r, r, \ldots, -r, r, -r\varepsilon) \in \mathbb F_q^d.$ Hence, the proof is the same as the case of (i) except that for odd $d,$
$$ \eta\left(\prod_{j=1}^n a_j\right)= \eta\left((-1)^{d} r^d \varepsilon^2\right)=\eta(-r).$$
Thus, the conclusion of Proposition \ref{propFV} (ii) follows.
\end{itemize}
\subsection{Proof of Proposition \ref{lem2.3K}}
Since $w(0):=\sum\limits_{\substack{x,y\in E:\\ ||x-y||_Q=0}}1$,  it follows from the general counting lemma (Lemma \ref{GCL}) that 
\begin{equation}\label{CFw0} w(0)=q^{2d} \sum_{m\in \mathbb F_q^d} \widehat{(S_Q)_0}(m) |\widehat{E}(m)|^2,\end{equation}
where $(S_{Q})_0=\{x\in \mathbb F_q^d: ||x||_Q=0\}.$ 
We will invoke the following explicit estimate on the Fourier transform on $(S_{Q})_0,$ which can be proven by Lemma \ref{FTHa}.
\begin{corollary} \label{cor6.2F}
Let $m\in \mathbb F_q^d.$ 
\begin{itemize} 
\item [(i)] If $d$ is even, then
$$ \widehat{(S_{Q})_0}(m)=q^{-1}\delta_0(m) + q^{-d/2}\eta(\varepsilon)  (S_{Q^*})_0(m)- q^{-(d+2)/2} \eta(\varepsilon).$$
\item[(ii)] If $d$ is odd, then
$$ \widehat{(S_{Q})_0}(m)
=\left\{\begin{array}{ll} q^{-1} \delta_0(m) \quad&\mbox{if}~~m\in (S_{Q^*})_0,\\
q^{-\frac{d+1}{2}} \eta(\varepsilon) \eta\left(\sum\limits_{j=1}^{d-1} (-1)^{j+1} m_j^2 + \varepsilon^{-1} m_d^2\right)\quad&\mbox{if}~~m\notin (S_{Q^*})_0.\end{array}\right.$$
\end{itemize} 
\end{corollary}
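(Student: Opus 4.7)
The plan is to recognize $(S_Q)_0$ as a diagonal quadratic variety $H_{\mathbf{a}}$ and invoke Lemma \ref{FTHa} with $n=d$, with essentially all the content being a careful bookkeeping of the coefficient vector and the resulting quadratic character.

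First I would write down the coefficient vector $\mathbf{a}=(a_1,\ldots,a_d)$ explicitly from Definition \ref{defnormQ}. In both parities we have $a_j = (-1)^{j+1}$ for $1\le j\le d-1$, while $a_d = -\varepsilon$ when $d$ is even and $a_d = \varepsilon$ when $d$ is odd. With these choices, $\|x\|_Q = \sum_{j=1}^d a_j x_j^2$, so $(S_Q)_0 = H_{\mathbf{a}}$. A direct inspection of Definition \ref{defnormQ} against the definition of $H_{\mathbf{a}^*}$ shows that $a_j^{-1}=(-1)^{j+1}$ for $j<d$ and $a_d^{-1}=-\varepsilon^{-1}$ (resp.\ $\varepsilon^{-1}$), which matches $\|m\|_{Q^*}$; therefore $H_{\mathbf{a}^*} = (S_{Q^*})_0$ in each case.

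Next I would compute the product $\prod_{j=1}^d a_j$. When $d$ is even, the factors $(-1)^{j+1}$ for $j=1,\ldots,d-1$ contribute $(-1)^{d/2-1}$, and multiplying by $a_d=-\varepsilon$ gives $\prod a_j=(-1)^{d/2}\varepsilon$. Hence $(-1)^{d/2}\prod a_j = (-1)^d\varepsilon = \varepsilon$, so the character factor $\eta\bigl((-1)^{d/2}\prod a_j\bigr)$ appearing in Lemma \ref{FTHa}(i) simplifies to $\eta(\varepsilon)$, and substitution into that formula yields statement (i) of the corollary at once. When $d$ is odd, the same count gives $\prod a_j = (-1)^{(d-1)/2}\varepsilon$, so $(-1)^{(d+3)/2}\prod a_j = (-1)^{d+1}\varepsilon = \varepsilon$, and again the character factor in Lemma \ref{FTHa}(ii) collapses to $\eta(\varepsilon)$. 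Finally, the sum $\sum_j a_j^{-1} m_j^2$ appearing in the off-variety case of Lemma \ref{FTHa}(ii) is literally $\|m\|_{Q^*}$ by the formula we just verified, giving statement (ii).

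There is no serious obstacle: the only place one can slip is in the parity accounting for $\prod a_j$ and in the two modular simplifications $(-1)^d=1$ (even case) and $(-1)^{d+1}=1$ (odd case). I would double-check these on small dimensions ($d=2,3,4$) at the end to make sure the signs are correct before writing the final formulas.
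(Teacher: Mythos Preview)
Your proposal is correct and follows essentially the same route as the paper: identify $(S_Q)_0=H_{\mathbf a}$ with the explicit coefficient vector from Definition~\ref{defnormQ}, compute $\prod_j a_j$ in each parity, and read off the result from Lemma~\ref{FTHa}. Your additional verifications that $H_{\mathbf a^*}=(S_{Q^*})_0$ and that the character factors collapse to $\eta(\varepsilon)$ are exactly the simplifications the paper leaves implicit.
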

\begin{proof} Let $\mathbf{a}=(a_1, \ldots, a_d)\in \mathbb F_q^d$ be the coefficient vector of $||x||_Q$, $x\in \mathbb F_q^d.$ Then it is clear that $(S_{Q})_0$ is the diagonal homogeneous variety of degree two with a coefficient vector $\mathbf{a}$ in $\mathbb F_q^d.$
\begin{itemize}
\item [(i)] Assume that $d$ is even. Then, by definition of $||x||_Q$, we see that 
$\mathbf{a} =(1,-1, \ldots, 1, -\varepsilon)$ in $\mathbb F_q^d.$  Since $\prod\limits_{j=1}^d a_j=(-1)^{d/2}\varepsilon,$  the first part of the corollary follows by applying Lemma \ref{FTHa} (i). 

\item[(ii)] Assume that $d$ is odd. Then $\mathbf{a} =(1,-1, \ldots, 1, -1, \varepsilon)$ in $\mathbb F_q^d$ and so
$$ \prod_{j=1}^da_j= (-1)^{(d-1)/2}\varepsilon.$$
Now applying Lemma \ref{FTHa} (ii), we obtain the statement of the second part of the corollary.
\end{itemize}
\end{proof}

We are now ready to complete the proof of Proposition \ref{lem2.3K}.  By the definition of $w(0)$, it is clear that $w(0)$ is a non-negative integer, namely, $w(0)\ge 0.$ So it remains to find the required upper bounds for $w(0).$ 

\subsubsection{Proof of Proposition \ref{lem2.3K}-(i), (ii)}
Suppose that $d$ is even. Then inserting the value of $\widehat{(S_Q)_0}(m)$ in Corollary \ref{cor6.2F} (i) into the formula \eqref{CFw0},   we see from a direct computation that 
$$w(0)=\frac{|E|^2}{q} + q^{3d/2} \eta\left(\varepsilon\right) \sum_{m\in (S_{Q^*})_0} |\widehat{E}(m)|^2-q^{(d-2)/2} |E| \eta\left(\varepsilon\right),$$
where we also used the Plancherel theorem \eqref{basicP} with the simple fact that $\widehat{E}(0,\ldots, 0)=q^{-d}|E|.$
Notice that the sign of the second term above is different from that of the third term since $\eta\left(\varepsilon\right)=\pm 1$ and $\sum\limits_{m\in (S_{Q^*})_0} |\widehat{E}(m)|^2\ge 0.$ So, to estimate an upper bound of $w(0),$  we can ignore the negative term which is exactly  one of them. By this way, we easily obtain the required upper bounds in Proposition \ref{lem2.3K} (i), (ii).

\subsubsection{Proof of Proposition \ref{lem2.3K}-(iii)} Suppose that $d$ is odd. It is clear from \eqref{CFw0} that
$$ 0\le w(0) \le q^{2d} \sum_{m\in \mathbb F_q^d} \left|\widehat{(S_Q)_0}(m)\right| |\widehat{E}(m)|^2.$$
On the other hand,  Corollary \ref{cor6.2F} (ii) implies that
$$ \left|\widehat{(S_Q)_0}(m)\right|=\left\{ \begin{array}{ll} q^{-1} \delta_0(m) &\quad\mbox{if}~~m\in  (S_{Q^*})_0,\\
 q^{-\frac{d+1}{2}} &\quad\mbox{if}~~ m\notin (S_{Q^*})_0.\end{array} \right.$$
Thus, combining the above facts, we obtain the desired estimate as follows:
\begin{align*} w(0)&\le q^{2d} \sum_{m\in \mathbb F_q^d} q^{-1} \delta_0(m) |\widehat{E}(m)|^2 + q^{2d} \sum_{m\in \mathbb F_q^d} q^{-\frac{d+1}{2}} |\widehat{E}(m)|^2\\
&= q^{2d-1} |\widehat{E}(0,\ldots, 0)|^2 + q^{2d}q^{-\frac{d+1}{2}}\sum_{m\in \mathbb F_q^d} |\widehat{E}(m)|^2\\
&= \frac{|E|^2}{q} + q^{\frac{d-1}{2}}|E|.\end{align*}


\end{document}